\newtheorem{proposition}{Proposition}
\newtheorem{theorem}[proposition]{Theorem}
\newtheorem{lemma}[proposition]{Lemma}
\newcommand{\reals}{\mathbb R}
\def \ignore#1{\relax}
\begin{document}

\title[The Grone Merris Conjecture] {The Grone-Merris Conjecture}

\author{Hua Bai}
\address{Department of Mathematics\\
         Boston College\\
         Chestnut Hill, MA 02467, USA}
\email{baihu@bc.edu}

\date{\today}
\thanks{The author was partially supported by NSF grant DMS-0604866}

\keywords{Grone-Merris conjecture, Laplacian matrix, majorization,
split graph, Courant-Fischer-Weyl Min-Max Principle,  simplicial
complex}

\maketitle

\begin{abstract}
{In spectral graph theory, the Grone and Merris conjecture asserts
that the spectrum of the Laplacian matrix of a finite graph is
majorized by the conjugate degree sequence of this graph. We give
a complete proof for this conjecture.}
\end{abstract}

The Laplacian of a simple graph $G $ with $n$ vertices is a
positive semi-definite $n\times n$ matrix $L(G)$ that mimics the
geometric Laplacian of a Riemannian manifold; see \S
\ref{sec:Definitions} for definitions, and \cite{Chung97,
Merris9402} for comprehensive bibliographies on the graph
Laplacian. The spectrum sequence $\lambda(G)$ of $L(G)$ can be
listed in non-increasing order as
$$
\lambda_1(G)\ge \lambda_2(G)\ge \dots \ge \lambda_{n-1}(G)\ge
\lambda_n(G) = 0 .
$$

For two non-increasing real sequences $\mathbf{x}$ and
$\mathbf{y}$ of length $n$, we say that $\mathbf{x}$ is
\emph{majorized} by $\mathbf{y}$ (denoted $\mathbf{x}\preccurlyeq
\mathbf{y}$) if
$$
\sum_{i=1}^k x_i \le \sum_{i=1}^k y_i  \text{ for all } k \le n,
 \text{ and  }  \sum_{i=1}^n x_i=\sum_{i=1}^n y_i.
$$
This notion was introduced because of the following fundamental theorem.

\begin{theorem}[Schur-Horn Dominance Theorem \cite{Schur23,Horn54}]
There exists a Hermitian matrix $H$ with diagonal entry sequence
$\mathbf{x}$ and spectrum sequence $\mathbf{y}$ if and only if $
\mathbf{x} \preccurlyeq \mathbf{y} $. \qed
\end{theorem}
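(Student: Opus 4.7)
The plan is to prove the two implications separately, as they have quite different flavors: the forward direction (Schur) follows cleanly from the variational characterization of eigenvalues, while the reverse direction (Horn) requires an explicit realization argument.

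For the forward direction, assume $H$ is Hermitian with non-increasing diagonal $\mathbf{x}$ and spectrum $\mathbf{y}$. I would invoke the Courant-Fischer-Weyl min-max principle in its trace form: for each $k \le n$,
$$
\sum_{i=1}^k y_i \;=\; \max_{\substack{V \subseteq \complexes^n \\ \dimens V = k}} \mathrm{tr}(P_V^* H P_V),
$$
where $P_V$ is any $n \times k$ matrix whose columns form an orthonormal basis of $V$. Choosing $V$ to be the coordinate subspace spanned by the standard basis vectors $e_{j_1},\dots,e_{j_k}$ indexing the $k$ largest diagonal entries of $H$ yields $\mathrm{tr}(P_V^* H P_V) = \sum_{i=1}^k x_i$, proving the partial-sum inequality. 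The equality of full traces, $\sum y_i = \mathrm{tr}(H) = \sum x_i$, takes care of the $k=n$ case.

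For the reverse direction, given $\mathbf{x} \preccurlyeq \mathbf{y}$ I would construct a real symmetric matrix with the prescribed diagonal and spectrum by induction on $n$. Starting from the diagonal matrix $D=\mathrm{diag}(\mathbf{y})$, I would apply a sequence of $2\times 2$ orthogonal conjugations (Givens rotations) on coordinate pairs $(i,j)$; each such rotation preserves the spectrum while continuously moving $(D_{ii},D_{jj})$ along the line $D_{ii}+D_{jj}=\text{const}$, so by the intermediate value theorem one can force any target value in the interval $[\min(D_{ii},D_{jj}),\max(D_{ii},D_{jj})]$ to appear as a new diagonal entry. Select indices with $y_j \le x_n \le y_i$ (which exist because $\mathbf{x}\preccurlyeq\mathbf{y}$ forces $y_n \le x_n \le y_1$), rotate so that one of the two new diagonal entries equals $x_n$, and then peel off that row and column. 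The residual $(n-1)\times(n-1)$ problem has a diagonal $(x_1,\dots,x_{n-1})$ and a spectrum obtained from $\mathbf{y}$ by replacing $y_i,y_j$ by $y_i+y_j-x_n$, and one checks that the induction hypothesis still applies.

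The main obstacle is the verification step in Horn's direction: after pinning a diagonal entry to $x_n$, one must show that the reduced diagonal is still majorized by the reduced spectrum, i.e.\ that the partial-sum inequalities $\sum_{i=1}^k x_i \le \sum_{i=1}^k y'_i$ survive for the new non-increasing rearrangement $\mathbf{y}'$. This is an elementary but delicate case analysis depending on how the rotated pair interleaves with the surrounding spectrum, and choosing $i,j$ to be adjacent values of $\mathbf{y}$ bracketing $x_n$ from above and below is what makes the bookkeeping come out correctly.
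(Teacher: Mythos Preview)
The paper does not prove this theorem at all: it is stated with citations to Schur and Horn and closed with a \qed, signalling that it is quoted as a classical result and used as a black box. There is therefore no proof in the paper to compare against.

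That said, your outline is a correct and standard proof. The Schur direction via the trace form of the Courant--Fischer--Weyl principle (equivalently Ky Fan's maximum principle, which the paper also quotes) is exactly right. For Horn's direction, your Givens-rotation induction is the classical construction, and the bookkeeping you flag as the obstacle does go through with the ``adjacent bracket'' choice: if $y_{i+1}\le x_n\le y_i$ then the replacement value $y_i+y_{i+1}-x_n$ lies in $[y_{i+1},y_i]$, so the reduced spectrum $\mathbf{y}'=(y_1,\dots,y_{i-1},\,y_i+y_{i+1}-x_n,\,y_{i+2},\dots,y_n)$ is already non-increasing; for $k<i$ the partial sums of $\mathbf{y}'$ coincide with those of $\mathbf{y}$, and for $k\ge i$ one has $\sum_{l\le k} y'_l=\sum_{l\le k+1} y_l - x_n\ge \sum_{l\le k+1} x_l - x_n\ge \sum_{l\le k} x_l$ since $x_{k+1}\ge x_n$. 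One small point of phrasing: after peeling off the row and column carrying $x_n$, the residual $(n-1)\times(n-1)$ matrix is diagonal with entries $\mathbf{y}'$, and it is the \emph{inductive hypothesis} that supplies a unitary conjugating it to have diagonal $(x_1,\dots,x_{n-1})$; extending that unitary by the identity on the peeled coordinate and composing with the initial rotation gives the desired $n\times n$ matrix.
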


In particular, if $\mathbf{d}(G) = (d_1,d_2,\dots, d_n)^T$ is the
non-increasing \textit{degree sequence} of $G$, which coincides
the diagonal entry sequence of the Laplacian matrix $L(G)$, the
Schur-Horn Dominance Theorem implies that $\mathbf{d}(G)
\preccurlyeq \lambda(G)$. Grone \cite{Grone95} improves this
majorization result: if $G$ has at least one edge, then $ (d_1+1,
d_2, \dots, d_{n-1}, d_n-1)^T \preccurlyeq \lambda(G)$.

For a non-negative integral sequence $\mathbf{d}$, we define its
\textit{conjugate degree sequence} as the sequence $\mathbf{d}' =
(d_1',d_2',\ldots, d_n')^T $ where
$$
d_k' :=\# \{i: d_i \ge k \}.
$$
Another important majorization relation is the following.

\begin{theorem}[Gale-Ryser \cite{Gale57, Ryser57}]
There exists a $(0,1)$-matrix $A$ with row and column sum vectors
$\mathbf{r}$ and $\mathbf{c}$ if and only if $\mathbf{r}
\preccurlyeq\mathbf{c'}$. \qed
\end{theorem}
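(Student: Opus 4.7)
The plan is to establish the equivalence in two separate directions. For the \emph{necessity}, suppose $A$ is a $(0,1)$-matrix whose row and column sum vectors, listed in non-increasing order, are $\mathbf{r}$ and $\mathbf{c}$. I would prove $\mathbf{r}\preccurlyeq\mathbf{c}'$ by means of the elementary identity
\[
\sum_{j=1}^k c_j' \;=\; \sum_{j=1}^k \#\{i : c_i\ge j\} \;=\; \sum_i \#\{j : 1\le j\le k,\ c_i\ge j\} \;=\; \sum_i \min(c_i,k),
\]
which rewrites the partial sum of the conjugate sequence as a sum of truncated column sums. After permuting rows so that the first $k$ rows of $A$ are those with the largest sums, $\sum_{i=1}^k r_i$ equals the number of $1$'s in these rows, and column $j$ contributes at most $\min(c_j,k)$ of them; summing over $j$ gives exactly $\sum_{i=1}^k r_i\le\sum_{j=1}^k c_j'$. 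The total-sum equality $\sum_i r_i=\sum_j c_j'$ is immediate since both count the total number of $1$'s in $A$.

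For the \emph{sufficiency}, I would construct $A$ greedily, row by row. Assume $\mathbf{r}$ and $\mathbf{c}$ are non-increasing, and in the first row place $r_1$ ones in the columns whose demand $c_j$ is largest. Let $\tilde{\mathbf{r}}=(r_2,r_3,\dots)$ and let $\tilde{\mathbf{c}}$ be the residual column-sum vector obtained by subtracting the first row; the choice of the $r_1$ largest $c_j$'s keeps $\tilde{\mathbf{c}}$ non-negative. An induction on the number of rows will then complete the construction, provided one can verify that the reduced pair still satisfies $\tilde{\mathbf{r}}\preccurlyeq\tilde{\mathbf{c}}'$.

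The main technical obstacle is precisely this induction step: one must show that the greedy choice preserves the majorization relation. After subtracting the first row, the new conjugate $\tilde{\mathbf{c}}'$ differs from $\mathbf{c}'$ in a controlled way, shifting only at coordinates determined by the distinct values among the $c_j$'s, and a careful discrete exchange argument (in the spirit of Hardy--Littlewood--P\'olya) is required to confirm that every partial sum $\sum_{i=2}^{k+1}r_i$ is still bounded by $\sum_{j=1}^k \tilde{c}_j'$. If the bookkeeping becomes awkward, a cleaner alternative is to convert the problem into a bipartite transportation network: the existence of $A$ becomes feasibility of an integral flow, and the hypothesis $\mathbf{r}\preccurlyeq\mathbf{c}'$ translates exactly into the max-flow/min-cut (equivalently, Hall) condition for feasibility. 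I would present the greedy-and-induction argument as the most self-contained route.
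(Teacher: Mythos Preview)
The paper does not prove this theorem at all: it is stated with a terminal \qed and attributed to the original papers of Gale and Ryser, so there is no in-paper argument to compare against.  Your necessity argument is the standard one and is complete; in fact the identity $\sum_{j=1}^k c_j'=\sum_i\min(c_i,k)$ you use is exactly the computation the paper later invokes in the proof of Proposition~\ref{prop: simple fact}.  Your sufficiency plan via the greedy (Ryser) construction is the classical route and is correctly outlined, with the genuine technical point---preservation of the majorization $\tilde{\mathbf r}\preccurlyeq\tilde{\mathbf c}'$ after peeling off a row---properly flagged but not carried out.  If you intend this to stand as a proof rather than a sketch, that induction step needs to be written in full; otherwise your proposal is an accurate summary of a standard proof of a result the paper simply quotes.
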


Applying this to the adjacency matrix of $G$ immediately gives
that  ${\mathbf d}(G) \preccurlyeq{\mathbf d'}(G)$.

In 1994, Grone and Merris \cite{GroneMerris90,GroneMerris94}
raised the natural question whether the Laplacian spectrum
sequence and the conjugate degree sequence are majorization
comparable.

\newtheorem*{GMconjecture}{Grone-Merris Conjecture}

\begin{GMconjecture}
For any graph $G$, the Laplacian spectrum is majorized by the
conjugate degree sequence
$$
 \lambda(G) \preccurlyeq {\mathbf d'}(G).
$$
\end{GMconjecture}

In this paper, we give a complete proof to the Grone-Merris
Conjecture. As a consequence, we have the double majorization
$\mathbf d(G) \preccurlyeq \lambda(G) \preccurlyeq {\mathbf
d'}(G)$.

\medskip

See \cite{DuvalReiner02} for a partial result in this direction,
as well as \cite{Stephen05, Katz05, Kirk09, BLP08} for proofs in
the special cases. See also  \cite{DuvalReiner02} for a
generalization to simplicial complexes, which is still open.

\medskip

\noindent\textbf{Acknowledgements:} This work was started while
the author was visiting the University of Southern California,
whose support and hospitality is gratefully acknowledged. The
author also thanks Francis Bonahon for his support and
encouragement throughout the years, Jun Ying and Jie Ying for
critical Matlab computations, Russell Merris for useful
suggestions, and Andries Brouwer, Tao Li and the referee for many
valuable comments.

\section {The Laplacian matrix and the majorization relation}
\label{sec:Definitions}

Let $G=(V,E)$ be a simple finite graph with $n=|V|$
vertices. We write $i\sim j$ when the  $i$-th vertex is adjacent
to the $j$-th vertex, and we let $d_i$ denote the degree of the $i$-th
vertex.

The \textit{Laplacian matrix} $L(G)$ of the graph $G$ is the
$n\times n$ matrix defined by
$$
   L(G)_{ij}=\left\{
                 \begin{array}{ll}
                     d_i & \mbox{if\ }  i=j; \\
                     -1  & \mbox{if\ }  i\sim j;\\
                     0   & \mbox{otherwise}.
                 \end{array}
             \right.
$$
We can also express the Laplacian as $L(G)= D - A $, where $D$ is
the diagonal matrix defined by the  degree sequence, and $A$ is
the adjacency
 $(0,1)$-matrix of the graph.

It is well-known that $L(G)$ is positive semi-definite, since it
corresponds to the quadratic form
$$
 \mathbf{x}^T L(G) \mathbf{x} = \sum_{i \sim j} (x_i - x_j)^2
  \text{ for  }  \mathbf{x}=(x_1, \dots, x_n)^T \in \reals^n.
$$
Let $ \mathbf{\lambda} = (\lambda_1,\lambda_2, \dots,
\lambda_n)^T$ be the non-increasing \textit{spectrum sequence} of
the Laplacian matrix $L(G)$. The smallest eigenvalue is
$\lambda_n=0$, with eigenvector $\mathbf{1}_n =  (1,1,\dots, 1 )^T
$.

Given two vectors $ \mathbf{x} = (x_1, \dots, x_n)^T $ and $
\mathbf{y} = (y_1, \dots, y_n)^T $ in $\reals^n$, rearrange their
components in non-increasing order as
$$
x_{[1]} \ge x_{[2]} \ge \dots \ge x_{[n]}, \quad y_{[1]} \ge
y_{[2]} \ge \dots \ge y_{[n]}.
$$
We say that $\mathbf{x}$ is \textit{majorized} by $\mathbf{y}$,
and write $\mathbf{x} \preccurlyeq \mathbf{y}$, if
$$
  \sum_{i=1}^k x_{[i]} \le \sum_{i=1}^k y_{[i]}  \text{ for all }
  1 \le k \le n, \text{ and } \sum_{i=1}^n x_i = \sum_{i=1}^n y_i.
$$
We will make use of the following majorization inequality.

\begin{theorem}
[Fan  \cite{Fan49}] \label{thm: Fan Ky}
If $H_1$ and $H_2$ are
Hermitian matrices,  then
$$
\lambda(H_1+H_2) \preccurlyeq \lambda(H_1) + \lambda(H_1).
$$
\vskip -\belowdisplayskip
\vskip -\baselineskip
  \qed
\end{theorem}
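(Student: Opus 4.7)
The plan is to deduce Fan's inequality from the Courant--Fischer--Weyl variational characterization of partial eigenvalue sums (this is why that principle appears in the keywords). Concretely, for any $n\times n$ Hermitian matrix $H$ and any $1\le k\le n$, I will invoke the identity
$$
\sum_{i=1}^k \lambda_i(H) \;=\; \max_{V}\, \mathrm{tr}\bigl(P_V\, H\, P_V\bigr),
$$
where the maximum ranges over all $k$-dimensional subspaces $V\subseteq \complexes^n$ and $P_V$ denotes the orthogonal projection onto $V$. Equivalently, writing $V=\mathrm{span}(u_1,\dots,u_k)$ for an orthonormal basis, the right-hand side equals $\max \sum_{j=1}^k u_j^* H u_j$ over all orthonormal $k$-tuples. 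This characterization follows by diagonalizing $H$ and reducing to a standard extremal problem.

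Granting that identity, the partial-sum inequality is immediate from subadditivity of the maximum. For each $k\le n$, let $V_k$ be a $k$-dimensional subspace achieving the maximum for $H_1+H_2$. Then
$$
\sum_{i=1}^k \lambda_i(H_1+H_2) \;=\; \mathrm{tr}\bigl(P_{V_k}(H_1+H_2)P_{V_k}\bigr) \;=\; \mathrm{tr}\bigl(P_{V_k}H_1 P_{V_k}\bigr) + \mathrm{tr}\bigl(P_{V_k}H_2 P_{V_k}\bigr),
$$
and each summand is bounded by $\sum_{i=1}^k\lambda_i(H_1)$ and $\sum_{i=1}^k\lambda_i(H_2)$ respectively, by applying the variational formula to $H_1$ and $H_2$ individually with the suboptimal choice $V=V_k$. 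This gives the required partial-sum inequality.

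To finish verifying the majorization I need equality of the total sums when $k=n$. This is immediate: $\sum_{i=1}^n \lambda_i(H) = \mathrm{tr}(H)$ for any Hermitian $H$, and trace is linear, so $\mathrm{tr}(H_1+H_2)=\mathrm{tr}(H_1)+\mathrm{tr}(H_2)$.

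The only nontrivial ingredient is the variational formula for $\sum_{i=1}^k\lambda_i(H)$, which is the main conceptual step; once that is in hand, the proof is a one-line subadditivity argument. I therefore expect no real obstacle beyond recalling (or briefly reproving) the Courant--Fischer--Weyl characterization, which can be done by choosing an eigenbasis of $H$ and checking that restricting to a $k$-dimensional subspace cannot do better than projecting onto the top-$k$ eigenspace.
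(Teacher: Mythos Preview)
Your argument is correct and is indeed the standard route to Fan's inequality: the Ky Fan maximum principle
\[
\sum_{i=1}^k \lambda_i(H)=\max_{\dim V=k}\mathrm{tr}(P_V H P_V)
\]
reduces the partial-sum bound to subadditivity of a maximum, and linearity of trace gives equality at $k=n$. There is nothing to compare against, however, because the paper does not prove this theorem at all: it is quoted from \cite{Fan49} with a \qed\ and used as a black box (note also the evident typo in the displayed statement, where the right-hand side should read $\lambda(H_1)+\lambda(H_2)$). Your write-up supplies exactly the proof one would expect; the only point worth tightening is that the variational identity you invoke is itself often attributed to Ky Fan rather than to Courant--Fischer--Weyl, so calling it the ``main conceptual step'' is slightly circular in spirit, though the eigenbasis argument you sketch for it is perfectly valid and independent of the theorem being proved.
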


\section{Split graphs}

A graph is \textit{split} (also called \textit{semi-bipartite} in
\cite{Katz05}) if its vertices can be partitioned into a clique
$V_1$ and a  co-clique $V_2$. This is equivalent to saying that
the subgraph induced by $V_1$ is complete, and that the subgraph
induced by $V_2$ is an independent set. See \cite{FolderHammer77,
TyshkevichChernyak79, Merris03, HammerSimeone81} for many
characterizations and properties of split graphs.

Given a split graph $G=(V,E)$, let $N=|V_1|$ be the size of
the clique, and $M=|V_2|$ be the size of the co-clique.  Let $\delta(G)$
be the maximum degree of vertices in $V_2$.  Clearly $\delta(G)\le
N$, and the Laplacian matrix of the split graph $G$ is of the form
$$
  L(G)=\left(
          \begin{array} {cc}
             K_N + D_1 & -A  \\
             -A^T      & D_2
          \end{array}
     \right),
$$
where $K_N$ is the Laplacian matrix of the complete graph on $N$
vertices, where $D_1$ and $D_2$ are diagonal matrices with
diagonal entries the vertex degrees  of $V_1$, $V_2$ respectively,
and where $A$ is the adjacency matrix for edges between $V_1$ and
$V_2$.

The Laplacian matrix is symmetric, and therefore Hermitian.

\begin{theorem}[Courant-Fischer-Weyl \cite{ReedSimon78}]
\label{thm: CFW}
Let the $n\times n$ matrix $H$ be  Hermitian,
with eigenvalues $\lambda_1 \ge \lambda_2 \ge \dots \ge
\lambda_n$. Then
$$
\lambda_k= \max_{\mathrm{dim}(S)=k} \  \min_{0 \ne x\in
S}  \frac{ \langle Hx,x \rangle } {\langle x,x
\rangle}
 =\min_{\mathrm{dim}(S)={n-k+1}} \  \max_ {0 \ne x\in S}
   \frac{ \langle Hx,x \rangle} {\langle x,x
\rangle},
$$
where  the $\max$  (resp. $\min$) is taken over all $k$-dimensional
(resp. $(n-k+1)$-dimensional) subspaces of $\reals^n$. \qed
\end{theorem}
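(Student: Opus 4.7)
The plan is to prove both min-max formulas by reducing everything to the spectral decomposition of $H$. Since $H$ is Hermitian, the spectral theorem furnishes an orthonormal eigenbasis $v_1, v_2, \dots, v_n$ with $Hv_i = \lambda_i v_i$. Every estimate will be pushed into this basis: writing $x = \sum_{i=1}^n c_i v_i$, one has $\langle Hx, x\rangle = \sum_i \lambda_i |c_i|^2$ and $\langle x, x\rangle = \sum_i |c_i|^2$, so the Rayleigh quotient $\langle Hx, x\rangle / \langle x, x\rangle$ is a convex combination of the $\lambda_i$ weighted by $|c_i|^2/\|x\|^2$; in particular it lies between the smallest and the largest $\lambda_i$ with $c_i \ne 0$.

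For the first equality I would establish both inequalities. To show the max-min is at least $\lambda_k$, test against $S_0 := \mathrm{span}(v_1, \dots, v_k)$: every nonzero $x \in S_0$ has Rayleigh quotient a convex combination of $\lambda_1, \dots, \lambda_k$, hence at least $\lambda_k$, with equality at $x = v_k$. For the reverse inequality, given any $k$-dimensional subspace $S$, set $T := \mathrm{span}(v_k, v_{k+1}, \dots, v_n)$; since $\dim S + \dim T = k + (n-k+1) = n+1 > n$, the intersection $S \cap T$ contains a nonzero vector $x$. Because $x \in T$, only eigenvalues $\le \lambda_k$ contribute to the Rayleigh quotient at $x$, so the min over $S$ is $\le \lambda_k$ and therefore the max over all $k$-dimensional $S$ is $\le \lambda_k$.

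The second equality follows from a completely symmetric argument with the roles of $S_0$ and $T$ interchanged: the $(n-k+1)$-dimensional subspace $\mathrm{span}(v_k, \dots, v_n)$ caps the min-max above by $\lambda_k$, and for any $(n-k+1)$-dimensional $S$ the same dimension count $\dim S + \dim \mathrm{span}(v_1, \dots, v_k) = n+1 > n$ produces a nonzero vector in the intersection whose Rayleigh quotient is $\ge \lambda_k$, capping the min-max below by $\lambda_k$. Alternatively, one can deduce this from the first equality applied to $-H$, whose eigenvalues in non-increasing order are $-\lambda_n \ge \dots \ge -\lambda_1$, and re-index $k \leftrightarrow n-k+1$.

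There is no serious obstacle beyond the dimension-counting step that produces a nonzero vector in $S \cap T$; this is the single ingredient that, for every candidate subspace $S$, manufactures a test vector witnessing the desired bound, while the eigenbasis $S_0$ witnesses that the bound is sharp. Everything else is linear algebra in the eigenbasis. Since the paper only needs the two identities for subsequent applications to Laplacian spectra, no refinement beyond this standard argument is required.
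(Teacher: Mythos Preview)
Your argument is correct and is the standard proof of the Courant--Fischer--Weyl principle: diagonalize via the spectral theorem, exhibit the eigenspace $\mathrm{span}(v_1,\dots,v_k)$ (resp.\ $\mathrm{span}(v_k,\dots,v_n)$) to achieve the bound, and use the dimension count $\dim S + \dim T > n$ to force a nonzero intersection witnessing the reverse inequality. There is nothing to compare against, however: the paper does not supply its own proof of this theorem but simply quotes it from \cite{ReedSimon78} as a classical result (note the \qed\ immediately after the statement), so your write-up goes beyond what the paper itself contains.
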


We first investigate  the Laplacian spectrum of a split graph.

\begin{proposition} \label{prop: simple fact}
If $G$ is a  split graph of clique size $N$, then
$$
\lambda_{N-1}(G) \ge N \ge \delta(G) \ge \lambda_{N+1}(G).
$$
Moreover, if $ \lambda_{N}(G) \ge N $, then
$$
\sum_{i=1}^N d_i'  =  N^2 +\mathrm{Tr}(D_1).
$$
\end{proposition}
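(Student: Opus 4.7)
The plan is to verify the three eigenvalue inequalities by applying Courant-Fischer (Theorem~\ref{thm: CFW}) to three different test subspaces, and then to deduce the trace identity by showing that the hypothesis $\lambda_N(G)\ge N$ forces every clique vertex to have a neighbor in the co-clique.

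For $\lambda_{N-1}(G)\ge N$, I would plug the $(N-1)$-dimensional subspace $S_1=\{(y,0):y\in\reals^N,\ \mathbf{1}_N^T y=0\}$ into the max-min formula. On $S_1$ the quadratic form of $L(G)$ equals $y^T(K_N+D_1)y$, and since $K_N$ restricts to $N\cdot\mathrm{Id}$ on $\mathbf{1}_N^\perp$ while $D_1$ is positive semi-definite, the Rayleigh quotient on $S_1$ is bounded below by $N$. The middle inequality $N\ge\delta(G)$ is immediate from $|V_1|=N$. For $\delta(G)\ge\lambda_{N+1}(G)$ I would invoke the $(n-k+1)$-dimensional min-max formula with $k=N+1$, testing against the $M$-dimensional subspace $S_2=\{(0,z):z\in\reals^M\}$, on which the quadratic form is $z^T D_2 z\le\delta(G)\,\|z\|^2$.

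For the moreover statement I would begin with the elementary rewriting $\sum_{k=1}^N d_k'=\sum_i \min(d_i,N)$, obtained by exchanging the order of summation. For $i\in V_2$ one has $d_i\le\delta(G)\le N$ from what is already proved, so $\min(d_i,N)=d_i$, and the $V_2$-contribution equals $\mathrm{Tr}(D_2)=\mathrm{Tr}(D_1)$ (both traces count the number of edges between $V_1$ and $V_2$). The whole identity therefore reduces to showing $d_i\ge N$ for every $i\in V_1$, equivalently $(D_1)_{ii}\ge 1$, i.e.\ every clique vertex has at least one edge into the co-clique.

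This last implication is the main obstacle, and I would argue it contrapositively. Suppose some $v\in V_1$ has no neighbor in $V_2$; then no $u\in V_2$ is adjacent to $v$, so $\delta(G)\le N-1$. Apply the min-max formula for $\lambda_N(G)$ to the $(n-N+1)$-dimensional subspace $T=\mathrm{span}(e_v,\{e_u:u\in V_2\})$. For a test vector $x=c_v e_v+\sum_{u\in V_2} c_u e_u\in T$, the $N-1$ neighbors of $v$ all lie in $V_1\setminus\{v\}$, where $x$ vanishes, contributing $(N-1)c_v^2$; likewise every $u\in V_2$ has neighbors only in $V_1\setminus\{v\}$ (using $v\not\sim u$), contributing $d_u c_u^2$. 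Thus $x^T L(G)x=(N-1)c_v^2+\sum_{u\in V_2} d_u c_u^2$, whose Rayleigh quotient over $T$ is at most $\max(N-1,\delta(G))\le N-1$. Courant-Fischer then yields $\lambda_N(G)\le N-1<N$, contradicting the hypothesis and completing the argument.
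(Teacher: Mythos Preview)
Your proof is correct and follows essentially the same route as the paper: identical test subspaces $S_1$ and $S_2$ for the two eigenvalue bounds, and the same rewriting $\sum_{k\le N} d_k'=\sum_j\min(d_j,N)$ together with $\mathrm{Tr}(D_2)=\mathrm{Tr}(D_1)$ for the trace identity.

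The one point where the arguments differ is the contrapositive step showing that $\lambda_N(G)\ge N$ forces every clique vertex to have a neighbour in $V_2$. You build the $(M+1)$-dimensional test subspace $T=\mathrm{span}(e_v,\{e_u:u\in V_2\})$ by hand and bound the quadratic form directly. The paper instead observes that if $v_0\in V_1$ has no neighbour in $V_2$ then $G$ is \emph{also} split with clique $V_1\setminus\{v_0\}$ and co-clique $V_2\cup\{v_0\}$, and then simply invokes the already-proved inequality $\lambda_{\tilde N+1}\le\tilde N$ with $\tilde N=N-1$. Unwinding that invocation reproduces exactly your subspace $T$, so the two arguments are the same in content; the paper's phrasing is just a little more economical since it recycles the first part of the proposition rather than redoing a Rayleigh-quotient computation.
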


\begin{proof}
To prove the inequalities involving $\lambda_{N-1}(G)$ and
$\lambda_{N+1}(G)$ by the Courant-Fischer-Weyl Min-Max Principle,
it suffices to find an $(N-1)$-dimensional  (resp.
$M$-dimensional) subspace for which the action of $L(G)$ has a
desirable lower  (resp. upper) bound. There are natural
candidates.

Let $P \subset \mathbb{R}^{M+N}$ be the $(N-1)$-dimensional
subspace  consisting of all vectors of the form
$\left(
     \begin{array} {c}
      u \\ \mathbf{0}_{M}
     \end{array}
 \right)
$ with $u\in \mathbb{R}^{N}$ and $u \perp \mathbf{1}_N$. Then for
any unit vector $u\in \reals^n$,
$$
   \left\langle L(G) \left(
                              \begin{array} {c}
                                 u \\  \mathbf{0}_{M}
                              \end{array}
                            \right),
                           \left(
                              \begin{array} {c}
                                 u \\  \mathbf{0}_{M}
                               \end{array}
                            \right)
   \right\rangle  =  \langle (K_N+ D_1) u, u \rangle
                 = N+ \langle  D_1 u, u \rangle
                 \ge N.
$$

Similarly, consider the $M$-dimensional subspace $Q \subset
\mathbb{R}^{M+N}$ consisting of all vectors of the form
$\left(
     \begin{array} {c}
      \mathbf{0}_{N} \\ u
     \end{array}
 \right)$ with $u\in \mathbb{R}^{M}$. Then for any unit vector $u$,
$$
   \left\langle L(G) \left(
                              \begin{array} {c}
                                 \mathbf{0}_{N} \\ u
                              \end{array}
                            \right),
                           \left(
                              \begin{array} {c}
                                 \mathbf{0}_{N} \\ u
                               \end{array}
                            \right)
   \right\rangle  =  \langle D_2  u, u \rangle \le \delta(G).
$$
This proves our first statement  part that $\lambda_{N-1}(G) \ge N \ge
\delta(G) \ge \lambda_{N+1}(G)$.

When $\lambda_{N}(G) \ge N$, we assert that the degree of any
vertex in the clique $V_1$ is at least $ N $. For this, suppose
that our assertion is false, namely that there exists a vertex
$v_0 \in V_1$ with degree less than $N$. Then this vertex $v_0$ is
adjacent to none of the vertices of the co-clique $V_2$.
Consequently $G$ can be regarded as a split graph with new clique
$V_1 \setminus \{v_0\}$ and new co-clique $V_2 \cup \{v_0 \}$. The
size of the new clique is $\widetilde{N}=N-1$. Applying the first
part of the proposition, we obtain that
$$
\lambda_N(G)=\lambda_{\widetilde{N}+1}(G) \le \widetilde{N}=N-1,
$$
which is a contradiction.

For a conjugating pair of non-negative
integral sequences, the partial sum of one sequence can be
computed in a different way as
$$
\sum_{i=1}^N d_i' = \sum_{i=1}^N \sum_{j=1}^{M+N} \chi(d_j \ge
                      i) = \sum_{j=1}^{M+N} \min(d_j, N),
$$
where $\chi$ is the characteristic function. The second part
of the proposition now follows from the observation that
$$
\sum_{j=1}^{M+N} \min(d_j, N) = \sum_{j\in V_1} N + \sum_{j\in
             V_2} d_j  = N^2 + \mathrm{Tr}(D_2) = N^2 +  \mathrm{Tr}(D_1). \qedhere
$$
\end{proof}

The next lemma will play an essential role in our proof of the
Grone-Merris Conjecture. Its proof is presented in the next section.

\begin{lemma} \label{lem: key lemma}
Assume that $G$ is a split graph of clique size $N$. If either $
\lambda_N (G)> N$ or $ \lambda_N(G)=N> \delta(G)$, then the $N$-th
inequality of the Grone-Merris Conjecture holds, namely
$$
\sum_{i=1}^N \lambda_i \le \sum_{i=1}^N d_i'.
$$
\end{lemma}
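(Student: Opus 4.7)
My plan is to prove the lemma via the Courant--Fischer--Weyl min--max principle (Theorem~\ref{thm: CFW}), exploiting a structural consequence of the hypothesis for the top-$N$ eigenspace of $L(G)$.

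First, by the second part of Proposition~\ref{prop: simple fact}, the hypothesis yields $\sum_{i=1}^N d_i' = N^2 + \mathrm{Tr}(D_1)$, so the target reduces to $\sum_{i=1}^N \lambda_i \le N^2 + \mathrm{Tr}(D_1)$. The key structural observation is that, under either form of the hypothesis, the top-$N$ eigenspace $W$ of $L(G)$ intersects the co-clique coordinate subspace $\{0_N\} \oplus \reals^M$ only at the origin. Indeed, a unit $v = (0, u) \in W$ would yield both $v^T L(G) v = u^T D_2 u \le \delta(G)$ and $v^T L(G) v \ge \lambda_N(G)$; either $\lambda_N > N \ge \delta$ or $\lambda_N = N > \delta$ forces $u = 0$. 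Since $\dim W = N$, the projection $W \to \reals^N$ onto the clique coordinates is an isomorphism, so $W$ is the graph of a unique linear map $T \colon \reals^N \to \reals^M$.

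Next, parametrizing $W$ by $V = \binom{I_N}{T} \in \reals^{(N+M) \times N}$ (whose columns span $W$), I would compute
\[
\sum_{i=1}^N \lambda_i \;=\; \mathrm{Tr}\!\bigl((V^T V)^{-1} V^T L(G) V\bigr) \;=\; N^2 + \mathrm{Tr}\!\bigl((I_N + T^T T)^{-1} K\bigr),
\]
where $K := V^T(L(G) - NI) V = (D_1 - J_N) - AT - T^T A^T + T^T(D_2 - NI) T \succeq 0$ (using $K_N - NI = -J_N$ and $L(G)|_W \succeq NI$). In the case $\delta(G) < N$, completing the square with $Y := (NI - D_2)^{-1} A^T$ yields
\[
K \;=\; (D_1 - J_N) + A(NI - D_2)^{-1} A^T \;-\; (T+Y)^T(NI - D_2)(T+Y),
\]
and the last summand is $\preceq 0$; the case $\lambda_N > N$ (allowing $\delta = N$) is handled by a perturbation in $D_2$.

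The main obstacle is the final trace bound
\[
\mathrm{Tr}\!\Bigl((I_N + T^T T)^{-1}\bigl[(D_1 - J_N) + A(NI - D_2)^{-1} A^T\bigr]\Bigr) \;\le\; \mathrm{Tr}(D_1),
\]
which must exploit the split-graph structure together with the fact that $T$ is not arbitrary but is pinned down by the eigenvalue equation $L(G) V = V \Lambda$ with $\Lambda \succeq NI$. The hypothesis gives $c_i \ge 1$ for every $i \in V_1$ (so $D_1 \succeq I_N$), and I expect the conclusion to combine a Cauchy--Schwarz inequality on the bipartite adjacency with the monotonicity $\mathrm{Tr}(BX) \le \mathrm{Tr}(X)$ (for $0 \preceq B \preceq I$ and $X \succeq 0$) to extract the desired bound. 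This last trace estimate is where the split structure enters most delicately, and I anticipate it to be the most intricate part of the argument.
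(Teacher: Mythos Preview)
Your setup is correct and in fact coincides with the paper's: parametrize the top-$N$ eigenspace of $L(G)$ as the column span of $\binom{I_N}{T}$, use Proposition~\ref{prop: simple fact} to reduce the target to $\sum_{i=1}^N\lambda_i\le N^2+\mathrm{Tr}(D_1)$, and (reading off the first block row of $L(G)\binom{I_N}{T}=\binom{I_N}{T}X$) obtain
\[
\sum_{i=1}^N\lambda_i \;=\; \mathrm{Tr}(X)\;=\;N(N-1)+\mathrm{Tr}(D_1)-\mathrm{Tr}(AT),
\]
so that the whole lemma is equivalent to the single inequality $\mathrm{Tr}(AT)\ge -N$.

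The gap is precisely where you say it is, and your proposed tools do not close it. Your completing-the-square gives $K\preceq P:=(D_1-J_N)+A(NI-D_2)^{-1}A^T$, and then $0\preceq (I+T^TT)^{-1}\preceq I$ yields $\mathrm{Tr}((I+T^TT)^{-1}K)\le \mathrm{Tr}(P)$; but
\[
\mathrm{Tr}(P)=\mathrm{Tr}(D_1)-N+\sum_{j=1}^{M}\frac{f_j}{N-f_j},
\]
and the last sum can exceed $N$ by an arbitrary amount (e.g.\ many co-clique vertices of degree $N-1$). So neither ``$\mathrm{Tr}(BX)\le\mathrm{Tr}(X)$ for $0\preceq B\preceq I$'' nor a Cauchy--Schwarz bound on $A(NI-D_2)^{-1}A^T$ can give $\mathrm{Tr}((I+T^TT)^{-1}K)\le\mathrm{Tr}(D_1)$ without using something specific about $T$ beyond the quadratic constraint.

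What is missing is the paper's central idea: one does \emph{not} estimate $T$ from the eigenvalue equation at $L(G)$ alone. Instead one deforms through the one-parameter family $L_\alpha=(1-\alpha)L(\text{complete split})+\alpha L(G)$, shows (via Courant--Fischer) that the gap $\lambda_{N+1}^{(\alpha)}\le N<\lambda_N^{(\alpha)}$ persists for all $\alpha\in[0,1)$, and then runs a continuity/open--closed argument on the entries of the eigenspace matrix $V^{(\alpha)}$ to prove that every entry of $V^{(1)}=T$ is \emph{non-positive} (with column sums $-1$ from $\binom{I_N}{T}\perp\mathbf{1}$). From that sign information, $\mathrm{Tr}(AT)=\sum_{i}\sum_{j\sim i}T_{ji}\ge \sum_i\sum_j T_{ji}=-N$ is immediate. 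Your proposal never touches this homotopy mechanism, and without it (or an equally new substitute) the final trace bound remains unproved.
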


\section {The homotopy method}

This section is devoted to proving Lemma \ref{lem: key lemma}. We
adopt a homotopy method, following an idea of Katz \cite{Katz05}
in his proof of the Grone-Merris Conjecture for $1$-regular
semi-bipartite graph.

Let $\alpha \in [0,1]$. Define an $(M+N) \times (M+N) $ matrix
$L_{\alpha}$ as
$$
   L_{\alpha}=(1-\alpha) \left(
                            \begin{array} {cc}
                             K_N + M          & -J_{N \times M}  \\
                             -J_{M \times N}  & N
                         \end{array}
                         \right)
            + \alpha   \left(
                            \begin{array} {cc}
                              K_N + D_1 & -A  \\
                              -A^T      & D_2
                            \end{array}
                       \right),
$$
where $J_{M\times N}$ denotes the $M\times N$ matrix whose entries
are all equal to $1$.

Note that $L_1=L(G)$ is the matrix we are interested in, and that
$L_0$ is the Laplacian of a complete split graph. The spectrum of
$L_0$ is well-understood:

\begin{lemma} \label{lem: complete split}
The Laplacian spectrum of  the complete split graph of clique
size $N$ and co-clique size $M$  is
$$
 \{ \  (M+N)^{(N)},  N^{(M-1)}, 0^{(1)}\ \},
$$
where $P^{(Q)}$ denotes $Q$ copies of the number $P$.
The eigenspace corresponding to the eigenvalue $N$ consists of all vectors
of the form $\left(
     \begin{array} {c}
      \mathbf{0}_{N} \\ v
     \end{array}
 \right)$,
where $v$ is $M$-dimensional and $v\perp \mathbf{1}_M$; the
eigenspace corresponding to the eigenvalue $ (M+N) $ is spanned by
the orthogonal vectors
$$
(\mathbf{0}_{i-1},\  M+N-i, \  -\mathbf{1}_{M+N-i} )^T, \quad 1\le i \le N.
$$
\vskip -\belowdisplayskip
\vskip - \baselineskip
\qed
\end{lemma}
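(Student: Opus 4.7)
The plan is to verify the claim by direct block-matrix computation on the explicit form of $L_0$. Using $K_N = N I_N - J_N$, the Laplacian of the complete split graph takes the block form
$$
L_0 = \left(\begin{array}{cc} (M+N) I_N - J_N & -J_{N\times M} \\ -J_{M\times N} & N I_M \end{array}\right).
$$

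First I would check the proposed eigenspace for the eigenvalue $N$. For any $v \in \reals^M$ with $v \perp \mathbf{1}_M$, the identity $J_{N\times M}\,v = (\mathbf{1}_M^T v)\,\mathbf{1}_N = 0$ makes the off-diagonal block annihilate $(\mathbf{0}_N, v)^T$, while the diagonal block $N I_M$ acts as multiplication by $N$, so $L_0 (\mathbf{0}_N, v)^T = N (\mathbf{0}_N, v)^T$. This yields an $(M-1)$-dimensional eigenspace of the form described. Separately, $L_0 \mathbf{1}_{M+N} = 0$ as for any graph Laplacian, accounting for the simple zero eigenvalue.

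Next I would substitute each proposed vector $v_i = (\mathbf{0}_{i-1}, M+N-i, -\mathbf{1}_{M+N-i})^T$ into $L_0$ and verify $L_0 v_i = (M+N)\, v_i$ block by block. The crucial arithmetic observation is that the sum of the first $N$ coordinates of $v_i$ equals $(M+N-i)-(N-i) = M$, independent of $i$; this makes the $-J_N$ term in the top-left block and the $-J_{N\times M}(-\mathbf{1}_M)$ term from the off-diagonal block cancel exactly in the top $N$ rows, leaving only the diagonal $(M+N) I_N$ contribution, and an analogous cancellation produces $(M+N)\,w_i$ in the bottom $M$ rows. Orthogonality of the $v_i$ for distinct $i$ is a short direct inner-product calculation, so the $N$ vectors $v_1,\dots,v_N$ are linearly independent.

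Finally, a dimension count $(M-1) + 1 + N = M+N$ shows that these three eigenspaces together span $\reals^{M+N}$ and therefore exhaust the spectrum, pinning down the multiplicities. No substantive obstacle is expected: the lemma is the explicit diagonalization of a highly symmetric matrix, and the only real subtlety is recognizing the arithmetic identity (sum equals $M$) that makes the cancellations for the eigenvalue $M+N$ work uniformly across all $i$.
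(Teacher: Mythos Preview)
The paper provides no proof for this lemma: the statement ends with \qed\ immediately, treating the result as a routine verification left to the reader. Your direct block-matrix computation is correct and supplies exactly that verification; the key arithmetic identity you isolate (that the sum of the first $N$ coordinates of each $v_i$ equals $M$) is precisely what makes the cancellation work, and the dimension count correctly closes the argument.
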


\begin{lemma} \label{lem: ev bounds}
If $\lambda_N (G)> N $ or $ \lambda_N(G)=N > \delta(G) $, then
$$
\lambda_{N+1}^{( \alpha )}  \le N < \lambda_{N}^{( \alpha )}
\text{ for all } 0\le \alpha <1.
$$
\end{lemma}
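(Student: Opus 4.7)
The plan has two parts. For the upper bound $\lambda_{N+1}^{(\alpha)} \le N$, I would apply the Courant-Fischer-Weyl Min-Max Principle to the same $M$-dimensional subspace $Q = \{(\mathbf{0}_N, u)^T : u \in \reals^M\} \subset \reals^{M+N}$ used in the proof of Proposition \ref{prop: simple fact}. For $x = (\mathbf{0}_N, u)^T$ a direct computation gives
\[
\langle L_\alpha x, x\rangle = \langle((1-\alpha)NI + \alpha D_2)u, u\rangle \le \bigl[(1-\alpha)N + \alpha\,\delta(G)\bigr]\|u\|^2 \le N\,\|x\|^2,
\]
using $\delta(G) \le N$ from Proposition \ref{prop: simple fact}, so $\lambda_{N+1}^{(\alpha)} \le N$ for every $\alpha \in [0,1]$.

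For the strict lower bound $\lambda_N^{(\alpha)} > N$, the idea is to take $S \subset \reals^{M+N}$ to be the $N$-dimensional subspace spanned by the eigenvectors of $L_1 = L(G)$ for the top $N$ eigenvalues; under either hypothesis $\lambda_N(G) > \lambda_{N+1}(G)$, so $S$ is unambiguous. Decomposing $L_\alpha = L_1 + (1-\alpha)(L_0 - L_1)$ and noting that $L_0 - L_1$ is positive semi-definite (it is the Laplacian of the bipartite graph on $V_1 \cup V_2$ formed by the non-edges of $G$), for every nonzero $x \in S$
\[
\frac{\langle L_\alpha x, x\rangle}{\|x\|^2} \ge \frac{\langle L_1 x, x\rangle}{\|x\|^2} \ge \lambda_N(G) \ge N.
\]

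Under the hypothesis $\lambda_N(G) > N$, the right-most inequality is already strict, and Courant-Fischer-Weyl yields $\lambda_N^{(\alpha)} > N$. Under the borderline hypothesis $\lambda_N(G) = N > \delta(G)$ with $\alpha < 1$, the delicate step is to rule out equality: equality forces both $\langle L_1 x, x\rangle = N\|x\|^2$ (hence $x \in \ker(L_1 - NI)$, since $N$ is the least eigenvalue of $L_1|_S$) and $\langle (L_0 - L_1)x, x\rangle = 0$ (hence $x \in \ker(L_0 - L_1)$, which forces $L_0 x = L_1 x = N x$). Thus $x$ is an $N$-eigenvector of $L_0$ as well, and Lemma \ref{lem: complete split} pins this down: $x = (\mathbf{0}_N, v)^T$ with $v \perp \mathbf{1}_M$. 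Imposing $L_1 x = N x$ on this vector reduces to $A v = 0$ and $D_2 v = N v$, and the condition $\delta(G) < N$ says no vertex in $V_2$ has degree $N$, so $D_2 v = N v$ forces $v = 0$. Hence no nonzero $x \in S$ can attain equality, and compactness of the unit sphere in $S$ provides a uniform gap, giving $\lambda_N^{(\alpha)} > N$ as required.

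The main obstacle is this equality-case analysis. Everything hinges on the sharp description of the $N$-eigenspace of $L_0$ supplied by Lemma \ref{lem: complete split}: it is what allows us to see that a vector simultaneously in $\ker(L_1 - NI)$ and $\ker(L_0 - L_1)$ must be supported on $V_2$ and concentrated on vertices of degree exactly $N$, whereupon the borderline hypothesis $\delta(G) < N$ collapses the intersection to zero.
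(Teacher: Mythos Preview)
Your argument is correct and uses the same two test subspaces as the paper (the subspace $Q$ for the upper bound, and the span $\tilde P$ of the top $N$ eigenvectors of $L_1$ for the lower bound), but your lower-bound computation rests on a different decomposition. The paper writes $L_\alpha=\alpha L_1+(1-\alpha)L_0$ and, using $\tilde P\perp\mathbf 1_{M+N}$ together with Lemma~\ref{lem: complete split}, invokes the spectral bound $\langle L_0 v,v\rangle\ge N$ to obtain $\langle L_\alpha v,v\rangle\ge\alpha\lambda_N(G)+(1-\alpha)N$. You instead write $L_\alpha=L_1+(1-\alpha)(L_0-L_1)$ and observe that $L_0-L_1$ is the Laplacian of the bipartite graph of $V_1$--$V_2$ non-edges of $G$, hence positive semi-definite; this yields the sharper bound $\langle L_\alpha v,v\rangle\ge\lambda_N(G)$ and dispenses with the need to check $\tilde P\perp\mathbf 1_{M+N}$, so the case $\lambda_N(G)>N$ is immediate for every $\alpha$ (the paper handles $\alpha=0$ separately via $\lambda_N^{(0)}=M+N$). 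In the borderline case both routes arrive at $L_0 x=Nx$ and appeal to Lemma~\ref{lem: complete split} to place $x$ in $Q$; the paper then finishes with the one-line Rayleigh bound $\langle L_1 x,x\rangle\le\delta(G)<N$, whereas you push a bit further to $D_2 v=Nv$, which is equivalent.
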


\begin{proof}
We again make use of  the Courant-Fischer-Weyl Min-Max Principle.
 Recall that the $M$-dimensional subspace $Q \subset
\mathbb{R}^{M+N}$ consists of all vectors of the form
$\left(
     \begin{array} {c}
      \mathbf{0}_{N} \\ u
     \end{array}
 \right)$ with $u\in \mathbb{R}^{M}$. Then for any unit vector $u$,
\begin{align*}
   \left\langle L_{\alpha} \left(
                              \begin{array} {c}
                                 \mathbf{0}_{N} \\ u
                              \end{array}
                            \right),
                           \left(
                              \begin{array} {c}
                                 \mathbf{0}_{N} \\ u
                               \end{array}
                            \right)
   \right\rangle & = (1-\alpha) \langle N u, u \rangle + \alpha \langle D_1(u) , u \rangle \\
                 & \le (1-\alpha)N+ \alpha \delta(G)
                  \le N.
\end{align*}
Therefore, the $(N+1)$-th largest eigenvalue $\lambda_{N+1}^{(
\alpha )}$ is at most $N$.

For the eigenvalue $\lambda_{N}^{( \alpha )}$, let $\tilde{P}$ be
the $N$-dimensional subspace  which is spanned by the eigenvectors
of $L_1$ corresponding to the $N$ largest eigenvalues. Clearly
$\tilde{P} \perp \mathbf{1}_{M+N}$. For any unit vector $v \in
\tilde{P}$,  we know from Lemma \ref{lem: complete split} that
$\langle L_0 (v), v \rangle \ge N$. Moreover,
\begin{align*}
   \langle L_{\alpha} (v), v \rangle
                      & = \alpha \langle L_1 (v), v \rangle
                               + (1-\alpha) \langle L_0 (v), v
                               \rangle \\
                      & \ge \alpha \, \lambda_N(G) + (1-\alpha)N
                       \ge N.
\end{align*}
Therefore,  the $N$-th largest eigenvalue $\lambda_{N}^{( \alpha
)}$ is at least $N$.

We next proceed to show that the inequality on $\lambda_{N}^{(
\alpha )}$ is strict, when $0\le \alpha <1$.
We already know that $\lambda_{N}^{( 0)}=M+N$. If $\lambda_{N}^{(
\alpha )}=N$ for some $0< \alpha <1$, then the above arguments
show that necessarily
$$
\lambda_N(G) =N, \ \ \langle L_1 v, v \rangle=N, \text{\ and \ }
L_0(v)=Nv.
$$
The first condition $\lambda_N(G) =N$ implies that  $\delta(G) <
N$, from our assumption on $\lambda_N(G)$; the third condition
$L_0(v)=Nv$ implies that $v$ is a unit vector in
$\mbox{Ker}(L_0-N)$, thus in turn a unit vector of $ Q $. Then
$$
\langle L_1 v, v \rangle \le \delta(G) < N,
$$
 which contradicts the second condition $\langle L_1 v, v \rangle=N$.
\end{proof}

We now consider all possible $N$-dimensional subspaces
$\left(
     \begin{array} {c}
      I_N \\ V^{(\alpha)}
     \end{array}
  \right) \subseteq (\mathbf{1}_{M+N})^\perp
$, where $V^{(\alpha)}$ is an $M\times N$ matrix.  Here the
notation of the subspace means that the subspace is spanned by the
column vectors of the matrix
$\left(
     \begin{array} {c}
      I_N \\ V^{(\alpha)}
     \end{array}
  \right)
$.

\begin{lemma} \label{lem: V equation}
If the subspace $\left(
     \begin{array} {c}
      I_N \\ V^{(\alpha)}
     \end{array}
  \right) \subseteq (\mathbf{1}_{M+N})^\perp
$ is an invariant subspace of $L_{\alpha}$, then the matrix
$V^{(\alpha)}$ solves the quadratic matrix equation
\begin{align*}
  & V^{(\alpha)} \left[ (1-\alpha)M + \alpha (N+ D_1) \right]  \\
= & -(1-\alpha) J_{M\times N} -\alpha A^T + \alpha  \left[  D_2 -
V^{(\alpha)} (J_{N\times M}-A
    ) \right] V^{(\alpha)}.
\end{align*}
In terms of matrix entries, this means that
\begin{equation}
\label{eqn:eqn1}
v_{ji}^{(\alpha)}=\frac { -(1-\alpha) - \alpha  \chi(i\sim j)  +
\alpha \left(f_j v_{ji}
        - \sum_{i'=1}^N \sum_{j' \nsim i'} v_{ji'}^{(\alpha)} v_{j' i}^{(\alpha)}
        \right) }
        { (1-\alpha)M + \alpha (N+d_i)},
\end{equation}
where the non-negative integers $d_i$, $f_j$ are the entries of the diagonal matrices
$$
D_1=\mathrm{Diag}(d_1,d_2, \ldots,d_N), \quad D_2=\mathrm{Diag}(f_1, f_2,
\ldots,f_M).
$$
\end{lemma}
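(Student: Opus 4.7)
My plan is to interpret the invariance hypothesis as a block-matrix identity, eliminate the auxiliary conjugator, and simplify using two inputs: the identity $K_N = N I_N - J_{N \times N}$ for the Laplacian of a complete graph, and the column-sum condition forced by orthogonality to $\mathbf{1}_{M+N}$.

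Invariance of the subspace means that $L_\alpha \left(\begin{smallmatrix} I_N \\ V^{(\alpha)} \end{smallmatrix}\right) = \left(\begin{smallmatrix} I_N \\ V^{(\alpha)} \end{smallmatrix}\right) B$ for some $N \times N$ matrix $B$. Splitting $L_\alpha$ in the natural $(N,M)$-block form, the top block simply identifies $B = K_N + (1-\alpha) M I_N + \alpha D_1 - [(1-\alpha) J_{N\times M} + \alpha A]\,V^{(\alpha)}$, while the bottom block gives the constraint $V^{(\alpha)} B = -(1-\alpha) J_{M \times N} - \alpha A^T + [(1-\alpha) N I_M + \alpha D_2]\,V^{(\alpha)}$. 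Substituting the first into the second produces a purely quadratic identity in $V^{(\alpha)}$, and everything that remains is algebraic bookkeeping.

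To carry out this bookkeeping, I expand $V^{(\alpha)} K_N = N V^{(\alpha)} - V^{(\alpha)} J_{N\times N}$ and feed in the orthogonality condition. The latter says that each column of $V^{(\alpha)}$ sums to $-1$, i.e., $\mathbf{1}_M^T V^{(\alpha)} = -\mathbf{1}_N^T$; equivalently $J_{N \times M} V^{(\alpha)} = -J_{N \times N}$ and, multiplying on the left by $V^{(\alpha)}$, also $V^{(\alpha)} J_{N\times N} = -V^{(\alpha)} J_{N \times M} V^{(\alpha)}$. These two identities cause the $V^{(\alpha)} K_N$ contribution together with an otherwise uncancelled $(1-\alpha) N V^{(\alpha)}$ to merge into the single bilinear term $-\alpha V^{(\alpha)}(J_{N\times M} - A) V^{(\alpha)}$, and after collecting like terms the stated matrix equation drops out. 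The entrywise identity (\ref{eqn:eqn1}) is then the $(j,i)$-entry of that matrix equation: the $J_{M \times N}$ term contributes $-(1-\alpha)$, the $A^T$ term contributes $-\alpha \chi(i \sim j)$, the diagonal block $\alpha D_2 V^{(\alpha)}$ produces $\alpha f_j v_{ji}^{(\alpha)}$, and expanding $-\alpha V^{(\alpha)}(J_{N\times M}-A)V^{(\alpha)}$ gives $-\alpha\sum_{i'=1}^{N}\sum_{j' \not\sim i'} v_{ji'}^{(\alpha)} v_{j'i}^{(\alpha)}$; dividing through by the strictly positive scalar $(1-\alpha) M + \alpha(N + d_i)$ gives (\ref{eqn:eqn1}).

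The main obstacle is precisely this $V^{(\alpha)} K_N$ step. Without the orthogonality hypothesis one is stuck with a loose $V^{(\alpha)} J_{N\times N}$ and a stray $(1-\alpha) N V^{(\alpha)}$ for which there is no natural home on the right-hand side, and the claimed equation fails. It is the identity $V^{(\alpha)} J_{N\times N} = -V^{(\alpha)} J_{N\times M} V^{(\alpha)}$, forced by orthogonality to $\mathbf{1}_{M+N}$, that makes these loose terms combine into the single bilinear contribution and leaves the clean coefficient $(1-\alpha) M + \alpha(N + D_1)$ on the left.
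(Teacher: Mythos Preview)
Your proposal is correct and follows essentially the same route as the paper: read off the conjugator $B$ (the paper's $X_\alpha$) from the top block, substitute into the bottom block to obtain the raw quadratic equation, and then use the orthogonality relation $J_{N\times M}V^{(\alpha)}=-J_{N\times N}$ to rewrite $V^{(\alpha)}K_N=[N+V^{(\alpha)}J_{N\times M}]V^{(\alpha)}$ and collapse the stray terms. The only cosmetic difference is that the paper additionally records the orthogonal complement and the second conjugator $Y_\alpha$, but neither is used in deriving the quadratic equation, so your more direct setup loses nothing.
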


\begin{proof}
It is easy to see that the orthogonal complement in
$\mathbb{R}^{M+N}$ of the subspace $ \left(
     \begin{array} {c}
      I_N \\ V^{(\alpha)}
     \end{array}
 \right)
$ is the subspace
$
\left(
   \begin{array} {c}
       -{V^{(\alpha)}}^T \\  I_{M}
     \end{array}
  \right)
$.
If the subspace
$
\left(
     \begin{array} {c}
      I_N \\ V^{(\alpha)}
     \end{array}
  \right)
$ is an invariant subspace of $L_{\alpha}$, then so is its
orthogonal complement, since $L_\alpha$ is a symmetric matrix.

The $L_\alpha$-invariance property  is equivalent to the
existence of two square matrices $X_{\alpha}$ and
$Y_{\alpha}$ such that
$$
    L_{\alpha} \left(
                     \begin{array} {cc}
                        I_N          & -{V^{(\alpha)}}^T \\
                        V^{(\alpha)} & I_{M}
                     \end{array}
              \right)
             = \left(
                         \begin{array} {cc}
                                I_N          & -{V^{(\alpha)}}^T \\
                                V^{(\alpha)} & I_{M}
                           \end{array}
               \right)
               \left(
                         \begin{array} {cc}
                             X_{\alpha} & 0 \\
                             0          & Y_{\alpha}
                         \end{array}
              \right).
$$
By comparison of the corresponding four block matrices, we immediately
obtain that
$$
X_{\alpha}=
     K_N+(1-\alpha)M+\alpha D_1 -[ (1-\alpha)J_{N\times M} +\alpha A
     ]V^{(\alpha)},
$$
$$
Y_{\alpha}= (1-\alpha)N +\alpha D_2 + [ (1-\alpha)J_{M\times N}
              +\alpha A^T ]{V^{(\alpha)}}^T,
$$
together with a quadratic matrix equation for $V^{(\alpha)}$:
\begin{align*}
  & V^{(\alpha)} \left[ K_N+(1-\alpha)M + \alpha D_1 \right] +
  (1-\alpha) J_{M\times N} + \alpha A^T \\
= & \left\{(1-\alpha ) N + \alpha D_2 + V^{(\alpha)}
\left[(1-\alpha
    )J_{N\times M} +\alpha A \right] \right\} V^{(\alpha)}.
\end{align*}

Because $ \left(
     \begin{array} {c}
      I_N \\ V^{(\alpha)}
     \end{array}
 \right) \perp \mathbf{1}_{M+N}
$, the entries of $V^{(\alpha)}$ satisfy that
$$
 \sum_{j=1}^M v_{ji}^{(\alpha)}=-1 \text{  for any  }
1\le i\le N.
$$
This condition, in terms of matrices, is equivalent to $
J_{N\times M} V^{(\alpha)}= -J_{N\times N}. $ This implies that $
V^{(\alpha)}K_N = [N +V^{(\alpha)}J_{N\times M}] V^{(\alpha)} $,
with which the above quadratic matrix equation can be simplified
to
\begin{align*}
  & V^{(\alpha)} [(1-\alpha)M + \alpha (N+ D_1)]  \\
= & -(1-\alpha) J_{M\times N} -\alpha A^T + \alpha  \left[  D_2 -
V^{(\alpha)} (J_{N\times M}-A
    ) \right] V^{(\alpha)}. \qedhere
\end{align*}

\end{proof}

The quadratic matrix equation is complicated, and is almost
impossible to be solved explicitly. Fortunately, we do not have to
do so.

From Lemma \ref{lem: ev bounds} and the assumption on
$\lambda_N(G)$, we know that
$$
\lambda_{N+1}^{(\alpha)} \lneqq \lambda_{N}^{(\alpha)} \quad
\mbox{for all} \ \ \alpha \in [0,1].
$$
Thus the subspace spanned by the eigenvectors of $L_{\alpha}$
corresponding to the $N$ largest eigenvalues is unique. Assume that
this subspace is given by $\left(
     \begin{array} {c}
      I_N \\ V^{(\alpha)}
     \end{array}
  \right)
$, so that the matrix $V^{(\alpha)}$ is well defined.

\begin{lemma}\label{lem: continuity of eigenspace}
The map $V^{(\alpha)}: [0,1]\rightarrow \mathbb{R}^{M \times N}$
is a continuous function of $\alpha$, for the usual metric of
$\mathbb{R}^{M\times N}$.
\end{lemma}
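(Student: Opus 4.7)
The strategy is to deduce continuity of $V^{(\alpha)}$ from that of the orthogonal projector $P_\alpha$ onto the top-$N$ eigenspace $E_\alpha$ of $L_\alpha$---which by construction is the column span of $\begin{pmatrix} I_N \\ V^{(\alpha)}\end{pmatrix}$---by realizing $V^{(\alpha)}$ as a Schur-style quotient of blocks of $P_\alpha$.

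The first step is a transversality check. Recall from the proof of Lemma \ref{lem: ev bounds} that $\langle L_\alpha w, w\rangle \le N\|w\|^2$ for every $w \in Q$, where $Q$ is the $M$-dimensional subspace of vectors whose top $N$ coordinates vanish. On the other hand, any $w \in E_\alpha$ satisfies $\langle L_\alpha w, w\rangle \ge \lambda_N^{(\alpha)}\|w\|^2 > N\|w\|^2$ by the Courant-Fischer-Weyl Min-Max Principle combined with Lemma \ref{lem: ev bounds}. Hence $E_\alpha \cap Q = \{0\}$, and consequently $E_\alpha^\perp \cap Q^\perp = (E_\alpha + Q)^\perp = \{0\}$, giving the dual transversality needed below.

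For the second step I would invoke the Riesz projector. The uniform spectral gap $\lambda_{N+1}^{(\alpha)} \le N < \lambda_N^{(\alpha)}$ of Lemma \ref{lem: ev bounds}, combined with continuity of the eigenvalues of the affine family $L_\alpha$, lets me choose, locally around any $\alpha_0 \in [0,1]$, a rectifiable simple closed curve $\gamma \subset \mathbb{C}$ that encloses $\lambda_1^{(\alpha)}, \dots, \lambda_N^{(\alpha)}$ and excludes $\lambda_{N+1}^{(\alpha)}, \dots, \lambda_{M+N}^{(\alpha)}$ for all $\alpha$ sufficiently close to $\alpha_0$. The Riesz formula
$$
P_\alpha \;=\; \frac{1}{2\pi i}\oint_\gamma (zI - L_\alpha)^{-1}\, dz
$$
then exhibits $P_\alpha$ as a norm-continuous function of $\alpha$, since $(zI - L_\alpha)^{-1}$ is jointly continuous in $(z,\alpha)$ on the compact set $\gamma \times [\alpha_0 - \epsilon, \alpha_0 + \epsilon]$.

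In the final step I extract $V^{(\alpha)}$ from $P_\alpha$. Write $P_\alpha$ in block form $\begin{pmatrix} P_{11}(\alpha) & P_{12}(\alpha) \\ P_{21}(\alpha) & P_{22}(\alpha)\end{pmatrix}$ with $P_{11}(\alpha)$ of size $N\times N$. The dual transversality $E_\alpha^\perp \cap Q^\perp = \{0\}$ says exactly that the map $u \mapsto P_\alpha(u, \mathbf{0}_M)^T$ is injective, so the matrix $\begin{pmatrix} P_{11}(\alpha) \\ P_{21}(\alpha)\end{pmatrix}$ has rank $N$ with columns spanning $E_\alpha$; in particular $P_{11}(\alpha)$---symmetric and positive semidefinite as a diagonal block of a self-adjoint projector---is positive definite. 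The identity
$$
\begin{pmatrix} I_N \\ V^{(\alpha)}\end{pmatrix} \;=\; \begin{pmatrix} P_{11}(\alpha) \\ P_{21}(\alpha)\end{pmatrix} P_{11}(\alpha)^{-1}
$$
then yields $V^{(\alpha)} = P_{21}(\alpha)\, P_{11}(\alpha)^{-1}$, which is continuous in $\alpha$ as a product of continuous matrix-valued functions together with continuous inversion. The only genuinely non-trivial ingredient is the continuity of $P_\alpha$ itself, which the uniform spectral gap delivers via the Riesz formula; the remaining work is just bookkeeping that stably recovers the graph representation from the projector.
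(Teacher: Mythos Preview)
Your proof is correct and takes a genuinely different route from the paper's. The paper argues sequentially: given $\alpha_n \to \alpha$, it stratifies the top-$N$ eigenvalues of $L_\alpha$ by multiplicity, builds for each stratum a basis of $Z_k^{\alpha_n} \cap W_k^\alpha$ consisting of unit vectors of the form $\cos\theta\, u^\alpha + \sin\theta\, w$ with $w$ in the lower strata, shows the angles $\theta \to 0$ via Rayleigh-quotient estimates, and then computes the inner product of the $i$-th column of $\begin{pmatrix} I_N \\ V^{(\alpha_n)}\end{pmatrix}$ with the $j$-th column of $\begin{pmatrix} -{V^{(\alpha)}}^T \\ I_M\end{pmatrix}$ to obtain $V_{ji}^{(\alpha_n)} - V_{ji}^{(\alpha)} \to 0$ entrywise. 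Your argument instead packages the spectral gap into continuity of the Riesz projector $P_\alpha$ and then recovers $V^{(\alpha)} = P_{21}(\alpha)\,P_{11}(\alpha)^{-1}$ algebraically. Your approach is shorter and more conceptual, and as a bonus it actually \emph{justifies} that the top eigenspace has the graph form $\begin{pmatrix} I_N \\ V^{(\alpha)}\end{pmatrix}$, which the paper simply assumes; the paper's approach is more elementary in that it avoids the holomorphic functional calculus, at the cost of a longer hands-on estimate. One small remark: at $\alpha = 1$ in the case $\lambda_N(G) = N > \delta(G)$, the strict inequality $\lambda_N^{(\alpha)} > N$ you cite from Lemma~\ref{lem: ev bounds} degenerates to equality, but transversality still follows since the $Q$-side bound then sharpens to $\langle L_1 w, w\rangle \le \delta(G)\,\|w\|^2 < N\,\|w\|^2$.
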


\begin{proof}Assume that $\alpha_n$ is a
sequence in $[0,1]$ such that $\alpha_n \rightarrow \alpha$ as
$n\rightarrow \infty$.

According to the algebraic multiplicity of eigenvalues of
$L_{\alpha}$, there exist positive integers $l=l(\alpha)$ and
$i_1,\ldots, i_l$ ($i_0=0$ by convention) such that $
i_1+i_2+\dots +i_l=N $ and
$$
 \lambda_{i_1+\dots + i_{k-1}+1}^{(\alpha)}=\dots
 =\lambda_{i_1+\dots + i_{k-1}+ i_k}^{(\alpha)}
> \lambda_{1+i_1+\dots + i_{k-1}+ i_k}^{(\alpha)}, \quad \forall
1\le k \le l.
$$

Let $\{ u_i^{\beta} \}_{i=1}^{M+N}$ be an orthonormal basis
consisting of the eigenvectors corresponding to the eigenvalues
$\lambda_i^{(\beta)}$ for any $\beta\in [0,1]$, and
$\{Z_{k}^{\alpha_n}\}_{k=1}^l$, $ \{ W_k^{\alpha} \}_{k=1}^l$
denote two sequences of monotonic subspaces of $\mathbb{R}^{M+N}$
given by
$$
Z_{k}^{\alpha_n}  = \mbox{span} \{ u_i^{\alpha_n}: i \le i_1+
\dots + i_{k} \}, \   W_k^{\alpha}  = \mbox{span} \{ u_i^{\alpha}:
i> i_1+ \dots + i_{k-1}  \}.
$$

By the Courant-Fischer-Weyl Min-Max Principle,
$$
\underset{0\ne u \in Z_k^{\alpha_n} } {\mbox{min} } \frac{ \langle
L_{\alpha_n}(u), u \rangle }{\langle u, u \rangle } =
\lambda_{i_1+ \dots + i_{k}}^{(\alpha_n)} \rightarrow
\lambda_{i_1+ \dots + i_{k}}^{(\alpha)} \quad \mbox{as} \ n
\rightarrow \infty
$$
and
$$
\underset{0\ne v \in W_{k+1}^\alpha} {\mbox{max} } \frac{ \langle
L_{\alpha}(v), v \rangle }{\langle v, v \rangle } =
\lambda_{1+i_1+ \dots + i_{k} }^{(\alpha)} \lvertneqq
\lambda_{i_1+ \dots + i_{k} }^{(\alpha)}.
$$
It follows that $Z_k^{\alpha_n} \cap W_{k+1}^\alpha = \{ 0 \}$ and
$ Z_k^{\alpha_n} \oplus W_{k+1}^\alpha = \mathbb{R}^{M+N}$ when
$n$ is sufficiently large. Moreover, we obtain that $
Z_{l}^{\alpha_n} = \bigoplus_{k=1}^l \left(Z_k^{\alpha_n}  \cap
W_k^\alpha \right) $ from
$$
\mbox{dim}(Z_k^{\alpha_n} \cap W_k^\alpha)=
\mbox{dim}(Z_k^{\alpha_n})+\mbox{dim}(W_k^\alpha)- (M+N)= i_k.
$$

Consider a basis of the subspace $Z_k^{\alpha_n} \cap W_k^\alpha$
which consists of unit vectors of the form
$$
u_{k,n,s}=\cos(\theta_{k,n,s})u_{i_1+ \dots + i_{k-1}+s}^{\alpha}
+\sin(\theta_{k,n,s})w_{k,s}, \quad 1\le s \le i_k,
$$
for some unit vector $w_{k,s}\in W_{k+1}^\alpha$. Necessarily
$\lim_{n \rightarrow \infty} \sin(\theta_{k,n,s}) = 0$, since $
\langle L_{\alpha_n}(u_{k,n,s}), u_{k,n,s} \rangle \ge
\lambda_{i_1+ \dots + i_{k} }^{(\alpha_n)} $ and
\begin{align*}
\langle L_{\alpha}(u_{k,n,s}), u_{k,n,s} \rangle = &
\cos^2(\theta_{k,n,s})\lambda_{i_1+ \cdots + i_{k} }^{\alpha} +
\sin^2(\theta_{k,n,s})\langle L_{\alpha}( w_{k,s} ), w_{k,s} \rangle \\
\le & \cos^2(\theta_{k,n,s})\lambda_{i_1+ \cdots + i_{k}
}^{(\alpha)} + \sin^2(\theta_{k,n,s}) \lambda_{i_1+ \cdots + i_{k}
+1 }^{(\alpha)}.
\end{align*}

Any vector $u\in Z_{l}^{\alpha_n}$ can now be expressed as
$$
u=\sum_{k=1}^l \sum_{s=1}^{i_k} c_{k,s} \left[
\cos(\theta_{k,n,s})u_{i_1+ \dots + i_{k-1}+s}^{\alpha}
+\sin(\theta_{k,n,s})w_{k,s} \right].
$$

Assume that the maximum of $|c_{k,s}|$ is achieved at $|c_{k_0,
s_0}|$.  Due to the orthogonality of $\{ u_i^{\alpha}\}_i$, the
absolute value of the coefficient of $u_{i_1+ \dots +
i_{k_0-1}+s_0}^{\alpha}$ is at most $\Vert u \Vert$. But when $n$
is sufficiently large, it is at least
$$
|c_{k_0, s_0}| \cdot \left( |\cos(\theta_{k_0,n,s_0})|-
\sum_{k=1}^l \sum_{s=1}^{i_k} |\sin(\theta_{k,n,s})| \right) \ge
\frac{|c_{k_0, s_0}|}{2}.
$$
Hence $ |c_{k_0, s_0}| \le 2 \Vert u \Vert $.
 For any given vector $v \in W_{l+1}^\alpha$, we see that
$$
| \langle u,v \rangle |  = \left| \sum_{k=1}^l
\sum_{s=1}^{i_k}  \langle c_{k,s} \sin(\theta_{k,n,s}) w_{k,s},  v
\rangle \right|
  \le 2 \Vert u \Vert \cdot \Vert v
\Vert \cdot \sum_{k=1}^l
\sum_{s=1}^{i_k} |\sin(\theta_{k,n,s})| ,
$$
which goes to zero as $n$ goes to infinity.

The subspace $Z_{l}^{\alpha_n}$ is nothing else but $\left(
     \begin{array} {c}
      I_N \\ V^{(\alpha_n)}
     \end{array}
  \right)
$, while $W_{l+1}^\alpha$ is nothing else but $\left(
     \begin{array} {c}
     -{V^{(\alpha)}}^T  \\ I_M
     \end{array}
  \right)
$. The inner product of the $i$-th column vector of the first
matrix and the $j$-th column vector of the second matrix is equal
to
$$
V_{ji}^{(\alpha_n)}- V_{ji}^{(\alpha)},
$$
which must go to zero as $n$ goes to infinity. This proves the
continuity of $V^{(\alpha)}$ on $\alpha$.
\end{proof}

\begin{lemma} \label{lem: connected interval}
Let $\Omega$ be the subset
$$
\{(x_{ji}): \sum_{k=1}^M x_{ki}=-1, \  \forall \,  1\le i\le N,
\mbox{\ and \ } x_{ji} \le 0, \,\,\, \forall \, 1\le j \le M, 1\le
i\le N \}.
$$
of $\mathbb{R}^{M\times N}$.
Then $V^{(\alpha)} \in \Omega$ for all $\alpha \in [0,1]$.
\end{lemma}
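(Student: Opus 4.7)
The plan is a standard homotopy/connectedness argument. Define
$$A = \{\alpha \in [0,1] : V^{(\alpha)} \in \Omega\};$$
the goal is to prove $A = [0,1]$. The column-sum condition $\sum_{k=1}^M v_{ki}^{(\alpha)} = -1$ is automatic for every $\alpha$, because the subspace $\binom{I_N}{V^{(\alpha)}}$ is orthogonal to $\mathbf{1}_{M+N}$; this is exactly the identity $J_{N\times M}V^{(\alpha)} = -J_{N\times N}$ already established inside the proof of Lemma~\ref{lem: V equation}. So the real content of the lemma is the non-positivity of every entry of $V^{(\alpha)}$.

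For the base case $\alpha = 0$, Lemma~\ref{lem: complete split} determines the unique top $N$-dimensional eigenspace of $L_0$. A direct check shows that it consists of all vectors of the form $(u,\,-M^{-1}(\mathbf{1}_N^T u)\mathbf{1}_M)^T$ with $u \in \mathbb{R}^N$, and writing this subspace in the form $\binom{I_N}{V^{(0)}}$ yields $V^{(0)} = -\frac{1}{M} J_{M \times N}$, whose entries are all equal to $-\frac{1}{M} < 0$. Hence $0 \in A$ and $V^{(0)}$ even lies in the interior of $\Omega$. By Lemma~\ref{lem: continuity of eigenspace}, the map $\alpha \mapsto V^{(\alpha)}$ is continuous, so $A$ is closed in $[0,1]$.

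The main step is to show that $A \cap [0,1)$ is also open in $[0,1)$. To do so, I plan to establish the stronger claim that every $\alpha \in A$ with $\alpha < 1$ in fact has $v_{ji}^{(\alpha)} < 0$ strictly for all $j,i$. Suppose instead some entry $v_{ji}^{(\alpha)}$ equals zero while all other entries are $\le 0$. Substituting $v_{ji}^{(\alpha)}=0$ into equation~(\ref{eqn:eqn1}) and clearing the denominator produces
$$\alpha \sum_{i'=1}^N \sum_{j' \nsim i'} v_{ji'}^{(\alpha)}\, v_{j'i}^{(\alpha)} = -(1-\alpha) - \alpha\,\chi(i\sim j).$$
The right-hand side is at most $-(1-\alpha) < 0$ since $\alpha < 1$, whereas each term $v_{ji'}^{(\alpha)} v_{j'i}^{(\alpha)}$ is a product of two non-positive reals and so is non-negative, making the left-hand side $\ge 0$. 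This contradiction forces strict negativity of every entry, which is an open condition, so every point of $A \cap [0,1)$ has an open neighborhood (in $[0,1)$) inside $A$.

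Since $[0,1)$ is connected, $A \cap [0,1)$ being both open and closed in $[0,1)$ and containing $0$ forces $A \supseteq [0,1)$; closedness of $A$ in $[0,1]$ then pulls $\alpha=1$ into $A$ as a limit, so $A=[0,1]$. The one delicate point I anticipate is precisely the endpoint $\alpha=1$: the strict-negativity argument breaks there because the inhomogeneous term $-(1-\alpha)$ in~(\ref{eqn:eqn1}) vanishes, which is why the topological step must be performed on $[0,1)$ and the endpoint recovered by continuity.
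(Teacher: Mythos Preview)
Your proof is correct and follows essentially the same approach as the paper: a connectedness argument on $[0,1)$ using continuity for closedness, equation~(\ref{eqn:eqn1}) for openness (via strict negativity of all entries when $\alpha<1$), and then continuity to capture the endpoint $\alpha=1$. The only cosmetic difference is that the paper reads off the explicit upper bound $v_{ji}^{(\alpha)}\le \frac{-(1-\alpha)}{(1-\alpha)M+\alpha(N+d_i)}<0$ directly from~(\ref{eqn:eqn1}), whereas you phrase the same step as a contradiction from assuming some entry vanishes.
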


\begin{proof} Consider the subset
$$
\Gamma=\{ \alpha\in [0,1):  V^{(\alpha)} \in \Omega\}
$$
 of the half-open half-closed interval $[0,1)$.

When $\alpha=0$,  $ v_{ji}^{(0)} \equiv -\frac{1}{M} $ (see Lemma
\ref{lem: complete split} or Equation (\ref{eqn:eqn1}) ). As a
consequence, $V^{(0)} \in \Omega$, so that  $0\in \Gamma$ and
$\Gamma$ is not empty.

Suppose there is a sequence of points $\alpha_n \in \Gamma$ and
$\lim_{n \rightarrow \infty} \alpha_n = \alpha$ with $\alpha$
still in $[0,1)$. By Lemma \ref{lem: continuity of eigenspace},
$\lim_{n \rightarrow \infty} V^{(\alpha_n)}=V^{(\alpha)}$. Because
$\Omega$ is a compact set, so $V^{(\alpha)} \in \Omega$ and
$\alpha \in \Gamma$. Therefore, $\Gamma$ is a closed   subset of
$[0,1)$.

Suppose $\alpha \in \Gamma$, namely $V^{(\alpha)} \in \Omega$ for
some $\alpha \in [0,1)$. Because  the quantities $\chi(i\sim j)$,
$f_j$ and $v_{ji'}^{(\alpha)} v_{j' i}^{(\alpha)}$ in Equation
(\ref{eqn:eqn1}) are all non-negative, we see that
$$
v_{ji}^{(\alpha)} \le \frac{-(1-\alpha)}{(1-\alpha)M
                         + \alpha (N+d_i)}  <0
 \text{ for all }  1\le j \le M, 1\le i\le N.
$$
Therefore $V^{(\alpha)}$ is contained in the interior of $\Omega$.
Since $V^{(\alpha)}$ depends continuously on $\alpha$, it follows
that $\Gamma$ is an open subset of $[0,1)$.

The interval $[0,1)$ is connected, and $\Gamma$ is an open closed
non-empty subset of it, therefore $\Gamma$ is equal to $[0,1)$.

By continuity at $\alpha=1$, $V^{(1)}$ is also in $\Omega$. This
proves that $V^{(\alpha)} \in \Omega$ for all $\alpha \in [0,1]$.
\end{proof}

During the proof of Lemma \ref{lem: V equation}, we have already
known that
$$
   L_{\alpha}   \left(
                      \begin{array} {c}
                       I_N \\ V^{(\alpha)}
                      \end{array}
               \right)
   =          \left(
                      \begin{array} {c}
                          I_N  \\ V^{(\alpha)}
               \end{array}
               \right)
        X_{\alpha}
$$
where
$$
X_{\alpha}= K_N+(1-\alpha)M+\alpha D_1 -[ (1-\alpha)J_{N\times M}
            +\alpha A ]V^{(\alpha)}.
$$
So the sum of the $N$ largest eigenvalues of $L_1$ is equal to the
trace of
$$
 X_1= K_N+ D_1 -  A V^{(1)}.
$$
But $V^{(1)} \in \Omega$ by Lemma \ref{lem: connected interval},
therefore
$$
\mathrm{Tr}(AV^{(1)})=\sum_{i=1}^N \sum_{j: j\sim i} v_{ji}
\ge \sum_{i=1}^N \sum_{j=1}^M v_{ji} =-N .
$$
Then
$$
\sum_{i=1}^N \lambda_i = N(N-1) + \mathrm{Tr}(D_1) -
\mathrm{Tr}(AV^{(1)}) \le N^2 + \mathrm{Tr}(D_1).
$$
By Proposition \ref{prop: simple fact}, this completes the proof of
Lemma \ref{lem: key lemma}.

\section {Proof of Grone-Merris Conjecture}
\label{sec: main proof}

For consistence we restate the Grone-Merris Conjecture here.
\begin{GMconjecture}
For any graph $G$, its Laplacian spectrum is majorized by its
conjugate degree sequence, namely
$
 \lambda(G) \preccurlyeq {\mathbf d'}(G)
$.
\end{GMconjecture}

The Grone-Merris Conjecture behaves nicely under complementation,
in the sense of the proposition below.

The \textit{complement graph} of a graph $G$ is a
graph $\overline{G}$ on the same vertices such that two vertices
of $\overline{G}$ are adjacent if and only if they are not
adjacent in $G$. The Laplacian matrices of the graph $G$ and of its
complementary graph $\overline{G}$ are related by the property that
$$
 L(G)+L(\overline{G})+J_n = nI_n.
$$
All these matrices commute with  each other,
so that
$$
\lambda(\overline{G}) =(n-\lambda_{n-1}(G), \ldots,
n-\lambda_1(G), 0);
$$
$$
 \mathbf{d}'(\overline{G}) =(n-d_{n-1}'(G), \ldots, n-d_1'(G), 0).
$$
From these we see that

\begin{proposition}\label{prop: complement}
For any $1\le k < n$, the $k$-th inequality holds for the graph
$G$ if and only if the $(n-k-1)$-th inequality holds for the
complement graph $\overline{G}$.
$$
\sum_{i=1}^k \lambda_i(G) \le \sum_{i=1}^k d_i'(G)
\Longleftrightarrow \sum_{j=1}^{n-1-k} \lambda_j(\overline{G}) \le
\sum_{j=1}^{n-1-k} d_j'(\overline{G}), \quad \forall 1\le k < n.
$$
\vskip -\belowdisplayskip
\vskip -\baselineskip
\qed
\end{proposition}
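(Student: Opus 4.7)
The plan is a direct bookkeeping argument using the two spectral formulas displayed just before the proposition. The substantive input is the pair of identities
$$
\sum_{i=1}^n \lambda_i(G) = \mathrm{Tr}(L(G)) = \sum_{i=1}^n d_i = \sum_{i=1}^n d_i'(G),
$$
where the last equality is the standard fact that a non-negative integer sequence and its conjugate have equal total. Combined with $\lambda_n(G)=0$ and $d_n'(G)=0$ (no vertex of a simple graph on $n$ vertices has degree $\ge n$), this gives the common value
$$
\sum_{i=1}^{n-1} \lambda_i(G) = \sum_{i=1}^{n-1} d_i'(G) = \sum_i d_i.
$$

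Next I would substitute $\lambda_j(\overline{G}) = n-\lambda_{n-j}(G)$ for $1\le j\le n-1$ into the left-hand side of the $(n-1-k)$-th inequality for $\overline{G}$, reindexing by $i=n-j$:
$$
\sum_{j=1}^{n-1-k}\lambda_j(\overline{G}) = n(n-1-k) - \sum_{i=k+1}^{n-1}\lambda_i(G) = n(n-1-k) - \sum_{i=1}^{n-1}\lambda_i(G) + \sum_{i=1}^{k}\lambda_i(G).
$$
The analogous manipulation with $d_j'(\overline{G}) = n-d_{n-j}'(G)$ gives
$$
\sum_{j=1}^{n-1-k}d_j'(\overline{G}) = n(n-1-k) - \sum_{i=1}^{n-1}d_i'(G) + \sum_{i=1}^{k}d_i'(G).
$$

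By the total-sum identity above, the constants $n(n-1-k)-\sum_i d_i$ appearing on each side are equal, so subtracting them shows that the $(n-1-k)$-th inequality for $\overline{G}$ is equivalent to the $k$-th inequality for $G$. There is no real obstacle: the only thing to monitor carefully is the index shift caused by the fact that $\lambda_n(G)=0$ and $d_n'(G)=0$ sit at position $n$ and so do not contribute to the reversed tails of length $n-1-k$. Once the two complementation formulas and the matched total sums are in hand, the equivalence is a one-line rearrangement.
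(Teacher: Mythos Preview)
Your argument is correct and is exactly the approach the paper has in mind: the paper simply records the two complementation formulas $\lambda_j(\overline{G})=n-\lambda_{n-j}(G)$ and $d_j'(\overline{G})=n-d_{n-j}'(G)$, writes ``From these we see that,'' and appends a \qed. You have merely spelled out the reindexing and the matching of total sums that make this immediate.
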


We are now ready to prove the Grone-Merris Conjecture.

Assume that the Grone-Merris Conjecture is not true, and the graph
$G=(V,E)$ is a counterexample. Namely, there exists an integer $k$
with  $1< k <n=|V|$, such that
$$
\sum_{i=1}^k \lambda_i > \sum_{i=1}^k d_i'.
$$

Without loss of generality, we can assume that this integer $k$ is
minimum over all counterexamples. Then we have
$$
\sum_{i=1}^{k-1} \lambda_i \le \sum_{i=1}^{k-1} d_{i}', \quad
\mbox {and} \quad \lambda_k > d_k'.
$$

Moreover, we can further assume that the number $|E|$ of edges is
minimum over all counterexamples with the same $k$. Under this
assumption, we claim that

\begin{lemma}
For any two vertices $i,j$ in the graph $G$,  if $d_i \le k $ and
$d_j \le k$, then they are not adjacent in $G$.
\end{lemma}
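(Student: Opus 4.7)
The plan is to prove this by contradiction, using the fact that $G$ was chosen with the minimum number of edges among all counterexamples with the prescribed $k$. Suppose, contrary to the claim, that $d_i \le k$, $d_j \le k$, and $i \sim j$ in $G$. Let $G' = G - ij$ be the graph obtained by deleting the edge $ij$. The strategy is to show that $G'$ is still a counterexample for the same value of $k$, which will contradict the edge-minimality of $G$.

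First I would compare the conjugate degree sequences. Using the identity
$$
\sum_{i=1}^k d_i'(G) = \sum_{v \in V} \min(d_v, k)
$$
(which appears in the proof of Proposition~\ref{prop: simple fact}), a quick case check depending on whether $d_i$ and $d_j$ equal $k$ or are strictly less shows that deleting the edge $ij$ decreases both $\min(d_i,k)$ and $\min(d_j,k)$ by exactly $1$ (this is where the hypothesis $d_i, d_j \le k$ is used). Hence
$$
\sum_{i=1}^k d_i'(G') = \sum_{i=1}^k d_i'(G) - 2.
$$

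Next I would compare the Laplacian spectra via Theorem~\ref{thm: Fan Ky}. Deleting the edge $ij$ corresponds to the rank-one update
$$
L(G) = L(G') + (e_i - e_j)(e_i - e_j)^T,
$$
and $(e_i-e_j)(e_i-e_j)^T$ has spectrum $(2, 0, \dots, 0)$. Fan's inequality then gives
$$
\sum_{i=1}^k \lambda_i(G) \le \sum_{i=1}^k \lambda_i(G') + 2,
$$
equivalently $\sum_{i=1}^k \lambda_i(G') \ge \sum_{i=1}^k \lambda_i(G) - 2$. Combining this with the hypothesis $\sum_{i=1}^k \lambda_i(G) > \sum_{i=1}^k d_i'(G)$ and the previous identity yields
$$
\sum_{i=1}^k \lambda_i(G') \ge \sum_{i=1}^k \lambda_i(G) - 2 > \sum_{i=1}^k d_i'(G) - 2 = \sum_{i=1}^k d_i'(G'),
$$
so $G'$ fails the $k$-th Grone--Merris inequality. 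Because removing an edge cannot alter the total $\sum \lambda_i = \sum d_i' = 2|E|$ except by the trivial offset (it is simply smaller in $G'$), and because the minimality of $k$ among counterexamples was defined relative to any graph, $G'$ qualifies as a counterexample at the same index $k$ with $|E(G')| = |E(G)| - 1$, contradicting the edge-minimality of $G$.

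There is no real obstacle here: the argument is a one-edge perturbation balanced between Fan's majorization inequality on the spectral side and the elementary $\min(d_v,k)$ formula on the conjugate-degree side, the hypothesis $d_i, d_j \le k$ being exactly what makes both sides decrease by $2$. The only point requiring a bit of care is the case analysis in the degree computation (whether $d_i$ equals $k$ or is strictly less), but in every case the drop is $2$.
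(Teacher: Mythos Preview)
Your proposal is correct and follows essentially the same approach as the paper: delete the edge $ij$, use Fan's inequality on the rank-one update $(e_i-e_j)(e_i-e_j)^T$ for the spectral side, and use the drop of $2$ in $\sum_{l\le k} d_l'$ (via $d_i,d_j\le k$) on the conjugate-degree side. The only cosmetic difference is that the paper argues the contrapositive---by edge-minimality $G'$ must satisfy the $k$-th inequality, and then Fan's bound forces $G$ to satisfy it too---whereas you show directly that $G'$ still violates it; these are the same argument.
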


\begin{proof}
We will prove this by contradiction. Assuming that the lemma is
false, namely there exists a pair of vertices such that
$$
d_i \le k, \quad d_j\le k, \quad i\sim j .
$$
Let $\widetilde{G}$ be
the graph obtained from $G$ by deleting the edge $ij$. Due to the
minimum property of $|E|$, we must have
$$
\sum_{i=1}^k \lambda_i(\widetilde{G}) \le \sum_{i=1}^k
d_{i}'(\widetilde{G}).
$$

Two Laplacian matrices are related via $ L(G)=L(\widetilde{G}) + H
$, where $H_{n \times n}$ is a positive semi-definite matrix whose
only non-zero entries are $H_{ii}=H_{jj}=1$ and
$H_{ij}=H_{ji}=-1$. Applying Fan's Theorem \ref{thm: Fan Ky}, we
see that
\begin{align*}
\sum_{i=1}^{k} \lambda_i(G)
       & \le  \sum_{i=1}^{k}
                       \lambda_i(\widetilde{G}) + \sum_{i=1}^{k} \lambda_i(H)
        \le \sum_{i=1}^{k} d_i'(\widetilde{G}) + Tr(H) \\
       & = \left[ \sum_{i=1}^{k} d_i'(G) -2 \right] +2
       = \sum_{i=1}^{k} d_i'(G).
\end{align*}

This contradicts our assumption that $G$ was a
counterexample, and therefore concludes the proof.
\end{proof}

Next, we add new edges to $G$ to get a new graph $\widehat{G}$.
Add to $G$ a new edge $ij$ for any pair of vertices $i$, $j$ in $G$ such that
$$
d_i \ge k, \ d_j \ge k,   \text{ and }  i\nsim j .
$$
The new graph $\widehat{G}$ so obtained  is a split graph.

The clique of $\widehat{G}$ consists of all vertices of $G$ whose
degree is at least $k$, so the size of the clique is equal to
$d_k'(G)$. Let  $N=d_{k}'(G)$ denote this size. The co-clique
consists of all vertices of $G$ whose degree is less than $k$, so
the maximum  degree of vertices in the co-clique is
$\delta(\widehat{G})\le k-1$.

Note that
$$
  d_1'(\widehat{G})=d_1'(G), \dots, d_k'(\widehat{G})=d_k'(G)
$$
  while
$ \lambda_i(\widehat{G}) \ge \lambda_i(G)$
      for all  $ 1\le i \le n
$, so these two inequalities are still valid for the new graph
$\widehat{G}$, namely
$$
\sum_{i=1}^k \lambda_i(\widehat{G}) > \sum_{i=1}^k
d_i'(\widehat{G}) \quad \mbox{and} \quad \lambda_k(\widehat{G})
> d_k'(\widehat{G})=N.
$$

Let us discuss the relationship between $N$ and $k$.

If $N<k$, then $\lambda_k(\widehat{G}) \le
\lambda_{N+1}(\widehat{G}) \le N$, which leads to a contradiction.
The second inequality comes  from Proposition \ref{prop: simple
fact}.

If $N=k$, then $\widehat{G}$ is a split graph of clique size $N$,
with the  property that
$$
\sum_{i=1}^N \lambda_i(\widehat{G})  > \sum_{i=1}^N
d_i'(\widehat{G}) \quad \mbox{and} \quad \lambda_N(\widehat{G})
> N.
$$
This  contradicts Lemma \ref{lem: key lemma}.

So $k< N$. Note that $\widehat{G}$ is a split graph of clique
size $N$. In this graph $\widehat{G}$, the maximum degree of
vertices in the  co-clique is  at most $(k-1)$, while the minimum
degree of  vertices in the clique is at least $(N-1)$. This means
that
$$
d_{N-1}'(\widehat{G})=\cdots=d_{k+1}'(\widehat{G})=d_k'(\widehat{G})=N.
$$

Combining this with $\lambda_{k+1}(\widehat{G}) \ge \ldots \ge
\lambda_{N-1}(\widehat{G}) \ge N$ from Proposition \ref{prop:
simple fact}, we see immediately that the inequality
$$
\sum_{i=1}^{k} \lambda_i(\widehat{G})
> \sum_{i=1}^{k} d_i'(\widehat{G})  \text{ can be extended to }
\sum_{i=1}^{N-1} \lambda_i(\widehat{G})
> \sum_{i=1}^{N-1} d_i'(\widehat{G}).
$$

Then we proceed to compare  $\lambda_N(\widehat{G})$ with the clique size $N$.

First consider the case where  $\lambda_N(\widehat{G}) \ge N$.
Because $N=d _{N-1}'(\widehat{G}) \ge d_N'(\widehat{G})$, the
split graph $\widehat{G}$ has clique size $N$, with the additional
property that
$$
\sum_{i=1}^N \lambda_i(\widehat{G})  > \sum_{i=1}^N
d_i'(\widehat{G}) \quad \mbox{and} \quad \lambda_N(\widehat{G})
\ge N > \delta(\widehat{G}).
$$
This again contradicts Lemma \ref{lem: key lemma}.

In the other case, where $\lambda_N(\widehat{G}) < N$, we switch
attention to the complement graph of $\widehat{G}$. This
complement graph is another split graph $\overline{\widehat{G}}$.
Its clique size is $M$, and
$$
\lambda_M(\overline{\widehat{G}})=(N+M) -\lambda_N(\widehat{G}) > M.
$$
According to Proposition \ref{prop: complement},
$$
\sum_{i=1}^{N-1} \lambda_i(\widehat{G})
> \sum_{i=1}^{N-1} d_i'(\widehat{G}) \quad \Longrightarrow \quad
 \sum_{i=1}^M \lambda_i (\overline{\widehat{G}}) > \sum_{i=1}^M
d_i' (\overline{\widehat{G}}).
$$
Therefore, $\overline{\widehat{G}}$ is a split graph of clique
size $M$, with the additional property that
$$
\sum_{i=1}^M \lambda_i (\overline{\widehat{G}}) > \sum_{i=1}^M
d_i' (\overline{\widehat{G}}) \quad \mbox{and} \quad
\lambda_M(\overline{\widehat{G}})> M.
$$
This again  contradicts Lemma \ref{lem: key lemma}.

All possible cases are eliminated, and the Grone-Merris Conjecture
is proved.


\begin{thebibliography}{50}

\bibitem{BLP08}
Ravindra B. Bapat, Arbind K. Lal, Sukanta Pati, \textit{Laplacian
spectrum of weakly quasi-threshold graphs}, Graphs Comb. {\bf 24},
(2008), 273--290.

\bibitem{Chung97}
Fan R. K. Chung, \textit{ Spectral graph theory}, volume {\bf 92}
of CBMS Regional Conference Series in Mathematics,  American
Mathematical Society, 1997.

\bibitem{DuvalReiner02}
Art M. Duval, Victor Reiner,  \textit{Shifted simplicial complexes
are Laplacian integral}, Trans. Amer. Math. Soc. {\bf 354} (2002),
4313--4344.

\bibitem{Fan49}
Ky Fan, \textit{On a theorem of Weyl concerning eigenvalues of
linear transformations I}, Proc. Nat. Acad. Sci. USA {\bf 35}
(1949), 652--655.

\bibitem{FolderHammer77}
St\'ehane F\"oldes and Peter L. Hammer, \textit{'Split graphs'},
Proceedings of the Eighth Southeastern Conference on
Combinatorics, Graph Theory and Computing, 1977, Congressus
Numerantium, XIX, Winnipeg: Utilitas Math.,  311--315, MR0505860

\bibitem{Gale57}
David Gale, \textit{A theorem on flows in networks}, Pacific J.
Math. {\bf 7} (1957), 1073--1082.

\bibitem{Grone95}
Robert Grone, \textit{Eigenvalues and degree sequences of graphs},
 Lin. Multilin. Alg. {\bf 39} (1995), 133--136.

\bibitem{GroneMerris90}
Robert Grone, Russell Merris, \textit{Coalescence, majorization,
edge valuations and the Laplacian spectra of graphs}, Lin.
Multilin. Alg. {\bf 27}(1990), 139--146.

\bibitem{GroneMerris94}
Robert Grone, Russell Merris, \textit{The Laplacian spectrum of a
graph II}, SIAM J. Disc. Math. {\bf 7} (1994), 221--229.

\bibitem{HammerSimeone81}
Peter L. Hammer, Bruno Simeone, \textit{The spittance of a graph},
Combinatorica {\bf 1} (1981), 275--284.

\bibitem{Horn54}
 Roger Alfred Horn, \textit{Doubly stochastic matrices and the diagonal of a rotation
matrix}, Amer. J. Math. {\bf 76} (1954), 620--630.

\bibitem{Katz05}
Nets Hawk Katz, \textit{ The Grone Merris conjecture and a
quadratic eigenvalue problem}, preprint (2005),
\texttt{ArXiv:math.CA/0512647}.

\bibitem{Kirk09}
Steve Kirkland, \textit{ Near threshold graphs}, Electron. J.
Combin. {\bf 16}(2009), Research Paper R42.

\bibitem{Merris9402}
Russell Merris, \textit{Laplacian matrices of graphs: a survey},
Linear Algebra and its Applications {\bf 197} \& {\bf 198} (1994), 143--176.

\bibitem{Merris03}
Russell Merris, \textit{Split graphs}, European J. Combin.  24,
{\bf 4} (2003), 413--430, MR1975945

\bibitem{ReedSimon78}
Michael Reed, Barry Simon, \textit{Methods of Modern Mathematical
Physics IV: Analysis of Operators}, Academic Press, 1978.

\bibitem{Ryser57}
Herbert J. Ryser, \textit{Combinatorial properties of matrices of
zeros and ones}, Pacific J. Math. {\bf 7} (1957), 1073--1082.

\bibitem{Schur23}
Issac Schur, \textit{Uber eine Klasse von Mittelbidungen mit
Anwendungen die Determinanten}, Sitzungsber. Berlin. Math.
Gesellschaft {\bf 22} (1923), 9--20.

\bibitem{Stephen05}
Tamon Stephen, \textit{On the Grone-Merris conjecture}, DMTCS
proc. AE (2005), 187--192.

\bibitem{TyshkevichChernyak79}
Regina I. Tyshkevich, Arkady A. Chernyak, \textit{Canonical
partition of a graph defined by the degrees of its vertices},
 (in Russian), Isv. Akad. Nauk BSSR, Ser. Fiz.-Mat. Nauk {\bf 5} (1979), 14--26.

\end{thebibliography}
\end{document}